\numberwithin{equation}{section}
\numberwithin{figure}{section}
\newtheorem{lemma}{Lemma}[section]
\newtheorem{theorem}[lemma]{Theorem}
\newtheorem{corollary}[lemma]{Corollary}
\newtheorem{proposition}[lemma]{Proposition}
\newtheorem{remark}[lemma]{Remark}
\newcommand{\bZ}{\mathbb{Z}}
\newcommand{\bQ}{\mathbb{Q}}
\newcommand{\bA}{\mathbb{A}}
\newcommand{\bN}{\mathbb{N}}
\newcommand{\Sym}{\text{Sym}}
\newcommand{\ve}{\varepsilon}
\newcommand{\bR}{\mathbb{R}}
\newcommand{\bC}{\mathbb{C}}
\newcommand{\cH}{\mathcal{H}}
\def\mo{\operatorname{mod}}
\begin{document}
\title{On M\"obius functions from automorphic forms and a generalized Sarnak's conjecture}
\author{Zhining Wei, Shifan Zhao}	

\maketitle

\begin{abstract}
   In this paper, we consider M\"obius functions associated with two types of $L$-functions: Rankin-Selberg $L$-functions of symmetric powers of distinct holomorphic cusp forms and $L$-functions of Maass cusp forms. We show that these M\"obius functions are weakly orthogonal to bounded sequences. As a direct corollary, a generalized Sarnak's conjecture holds for these two types of M\"obius functions.
   
\end{abstract}

\section{Introduction}
Let $a(n), b(n)$ be two arithmetic functions. We say $a(n)$ is \textit{(asymptotically) orthogonal} to $b(n)$, if 
$$\lim_{N \to \infty}\frac{1}{N}\sum_{n=1}^Na(n)b(n) = 0.$$
\par
Denote by $\mu(n)$ the M\"obius function. The Sarnak's conjecture, proposed by P. Sarnak (see \cite{S1},\cite{S2}), predicts that $\mu(n)$ behaves so randomly that it should be orthogonal to any deterministic sequence $\xi(n)$. Precisely, a sequence $\xi(n)$ is called \textit{deterministic} if it has the form
$$\xi(n) = f(T^nx),$$
where $T: X \to X$ is a continuous map on a compact topological space $X$ such that the system $(X,T)$ has zero entropy, $f:X \to \bC$ is a continuous function, and $x \in X$. 
\par
As examples, we state four cases where Sarnak's conjecture has been verified. It should be noted that the first three results stated below were proved before Sarnak's conjecture was formally proposed.
\par
If we take $\xi(n)$ to be the constant sequence $(1,1,\dots)$, then the orthogonality relation reads
$$\lim_{N \to \infty} \frac{1}{N} \sum_{n=1}^N \mu(n) = 0,$$
which is known to be equivalent to the Prime Number Theorem.
\par
The case where $\xi(n)$ is an additive character was studied by H. Davenport in \cite{D}. Precisely, let $\alpha$ be any real number and $A > 0$. It was proved in \cite{D} that
$$\frac{1}{N} \sum_{n=1}^N \mu(n)e(\alpha n) = O_A\left(\frac{1}{\log^AN}\right),$$
where the implied constant depends on $A$ and is uniform in $\alpha$. In this case, $\mu(n)$ is said to be \textit{strongly orthogonal} to $e(\alpha n)$, following terminologies introduced in \cite{GT}.
\par
B. Green and T. Tao considered the case where $\xi(n)$ is a nilsequence in \cite{GT}. What they proved is that for any nilmanifold $G/\Gamma$, any polynomial sequence $g:\bZ \to G$, and a Lipschitz function $F:G/\Gamma \to \bR$, one has
$$\frac{1}{N}\sum_{n=1}^N \mu(n)F(g(n)\Gamma) = O_{G,\Gamma,F,A}\left(\frac{1}{\log^AN}\right)$$
for any $A > 0$. This implies Davenport's result, with a suitable choice of $G,\Gamma,g$ and $F$.
\par
The case where $X$ is a compact abelian group and $T:X \to X$ is an affine linear map was studied by J. Liu and P. Sarnak. It was proved in \cite{LS2015} that for such a flow $(X,T)$, Sarnak's conjecture is true.
\par
Another worth mentioning result is due to \'E. Fouvry and S. Ganguly, where Fourier coefficients of modular forms are considered. Let $f$ be a Maass cusp form with normalized Fourier coefficients $\nu_f(n)$, and let $\alpha$ be any real number. Then it was proved in \cite{FG} that there exists some absolute effective constant $c_0$ such that
$$\frac{1}{N}\sum_{n=1}^N \mu(n)\nu_f(n)e(\alpha n) = O_f(e^{-c_0\sqrt{\log N}}),$$
which implies strong orthogonality between $\mu(n)\nu_f(n)$ and the additive character $e(\alpha n)$. This can be viewed as a GL(2) analogue of Davenport's result.
\par
One way to view the M\"obius function $\mu(n)$ is by the following identity
$$\zeta(s)^{-1} = \sum_{n=1}^\infty \frac{\mu(n)}{n^s},\hspace{3mm} \Re(s)>1,$$
where $\zeta(s)$ denotes the Riemann zeta function. One may replace $\zeta(s)$ by any $L$-function $L(s,\pi)$ (see section 5.1 of \cite{IwKo2004} for what we mean by an $L$-function), and express its reciprocal as a Dirichlet series
$$L(s,\pi)^{-1} = \sum_{n=1}^\infty \frac{\mu_\pi(n)}{n^s},\hspace{3mm}, \Re(s) \gg 1$$
where we call $\mu_\pi(n)$ the M\"obius function associated with $L(s,\pi)$.
\par
A natural question to ask is whether these generalized M\"obius functions are orthogonal to deterministic sequences, or in other words, whether a generalized Sarnak's conjecture is ture for them. A recent result towards this direction is due to Y. Jiang and G. L\"u. Precisely, let $F$ be a Hecke-Maass form for SL($2,\bZ$) or SL($3,\bZ$), and let $\alpha$ be a real number. Then it was proved in \cite{JL} that
$$\frac{1}{N} \sum_{n=1}^N \mu_F(n)e(\alpha n) = O_F(e^{-c\sqrt{\log N}})$$
for some absolute constant $c>0$. This result implies that $\mu_F(n)$ is strongly orthogonal to additive characters $e(\alpha n)$.
\par
In this paper we consider two types of $L$-functions and their associated M\"obius functions. The first type is Rankin-Selberg $L$-functions of two symmetric powers of holomorphic cusp forms. The second type is $L$-functions of Maass cusp forms. For a detailed description of these $L$-functions, see Section 2.
\par
Our first main result is that generalized Sarnak's conjecture holds for M\"obius functions associated to $L(s,\Sym^{m_1}(f) \times \Sym^{m_2}(g))$, where $f,g$ are distinct holomorphic cusp forms, unless $m_1 = m_2 = 0$ (in that case, $L(s,\Sym^{m_1}(f) \times \Sym^{m_2}(g))$ reduces to $\zeta(s)$). In fact, we will prove a stronger result:
\begin{theorem}\label{absolute rankin selberg}
Let $f,g$ be two distinct holomorphic Hecke eigenforms for SL$(2,\bZ)$. Let $m_1,m_2 \geq 0$ be natural numbers, not both zero. Then there exists some constant $\eta_{m_1,m_2} > 0$, depending only on $m_1$ and $m_2$ such that 
$$\frac{1}{N}\sum_{n=1}^N|\mu_{\Sym^{m_1}(f) \times \Sym^{m_2}(g)}(n)| = O\left(\frac{1}{\log^{\eta_{m_1,m_2}}N}\right).$$
\end{theorem}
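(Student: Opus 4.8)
The plan is to reduce the claim to an upper bound for the mean value of the non-negative multiplicative function $n\mapsto|\mu_\pi(n)|$, where I write $\pi=\Sym^{m_1}(f)\times\Sym^{m_2}(g)$. First I would record that $\mu_\pi$ is multiplicative and that, from the Euler product of $L(s,\pi)$, its value on a prime is $\mu_\pi(p)=-a_\pi(p)$, with $a_\pi(p)$ the $p$-th Dirichlet coefficient of $L(s,\pi)$; by the unramified Rankin--Selberg local factors this equals $\lambda_{\Sym^{m_1}(f)}(p)\,\lambda_{\Sym^{m_2}(g)}(p)$. Writing $\lambda_f(p)=2\cos\theta_p^f$ and $\lambda_g(p)=2\cos\theta_p^g$ with $\theta_p^f,\theta_p^g\in[0,\pi]$ (legitimate by Deligne's bound), this is $U_{m_1}(\cos\theta_p^f)\,U_{m_2}(\cos\theta_p^g)$, where $U_m$ is the Chebyshev polynomial of the second kind. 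Deligne's bound also gives $|\mu_\pi(p^k)|\le\binom{d}{k}$ with $d=(m_1+1)(m_2+1)$ and $\mu_\pi(p^k)=0$ for $k>d$, so $|\mu_\pi|$ is bounded on prime powers. By a standard Halberstam--Richert type upper bound for mean values of non-negative multiplicative functions, it then suffices to produce a constant $\kappa<1$ with
\[
\sum_{p\le x}\frac{|\mu_\pi(p)|}{p}=\sum_{p\le x}\frac{|a_\pi(p)|}{p}\le\kappa\log\log x+O(1),
\]
for this yields $\frac1N\sum_{n\le N}|\mu_\pi(n)|\ll(\log N)^{\kappa-1}$ and hence the theorem with $\eta_{m_1,m_2}=1-\kappa$.

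Second, I would identify the target constant. Set $c_m=\int_0^\pi|U_m(\cos\theta)|\,d\mu_{\mathrm{ST}}(\theta)$, where $d\mu_{\mathrm{ST}}=\frac2\pi\sin^2\theta\,d\theta$ is the Sato--Tate measure. Since the functions $U_m(\cos\theta)$ form an orthonormal system for $\mu_{\mathrm{ST}}$, Cauchy--Schwarz gives $c_m\le(\int U_m^2\,d\mu_{\mathrm{ST}})^{1/2}=1$, with equality only when $|U_m(\cos\theta)|$ is a.e.\ constant, i.e.\ only for $m=0$; thus $c_0=1$ while $c_m<1$ for every $m\ge1$. Because $m_1,m_2$ are not both zero, $c_{m_1}c_{m_2}<1$, and this product will be essentially the leading coefficient $\kappa$.

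Third --- the analytic heart --- I would bound the prime sum by a finite combination of symmetric-power Rankin--Selberg $L$-functions. Fix $\epsilon>0$ and choose non-negative trigonometric-polynomial majorants $P_i(\theta)\ge|U_{m_i}(\cos\theta)|$ with $\int P_i\,d\mu_{\mathrm{ST}}\le c_{m_i}+\epsilon$ (one-sided polynomial approximation of functions of bounded variation, Beurling--Selberg--Vaaler). Expanding $P_1(\theta_p^f)P_2(\theta_p^g)$ in the product Chebyshev basis gives a finite sum $\sum_{k_1,k_2}b_{k_1,k_2}\,\lambda_{\Sym^{k_1}(f)}(p)\lambda_{\Sym^{k_2}(g)}(p)$, whose $(0,0)$-coefficient is $(\int P_1\,d\mu_{\mathrm{ST}})(\int P_2\,d\mu_{\mathrm{ST}})$. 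Since $\lambda_{\Sym^{k_1}(f)}(p)\lambda_{\Sym^{k_2}(g)}(p)$ is the $p$-th coefficient of $L(s,\Sym^{k_1}(f)\times\Sym^{k_2}(g))$, which (by automorphy of the symmetric powers and the analytic properties of Rankin--Selberg $L$-functions) is holomorphic and non-vanishing on $\Re s=1$ apart from a pole at $s=1$ of order $1$ exactly when $\Sym^{k_1}(f)\cong\Sym^{k_2}(g)$ --- which for the distinct forms $f,g$ happens only for $(k_1,k_2)=(0,0)$ --- each partial sum $\sum_{p\le x}\lambda_{\Sym^{k_1}(f)}(p)\lambda_{\Sym^{k_2}(g)}(p)/p$ is $O_{k_1,k_2}(1)$ unless $(k_1,k_2)=(0,0)$, in which case it equals $\log\log x+O(1)$. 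Summing the finitely many terms yields
\[
\sum_{p\le x}\frac{|a_\pi(p)|}{p}\le(c_{m_1}+\epsilon)(c_{m_2}+\epsilon)\log\log x+O_{m_1,m_2,\epsilon}(1),
\]
and taking $\epsilon$ small enough that $\kappa:=(c_{m_1}+\epsilon)(c_{m_2}+\epsilon)<1$ completes the required bound.

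Finally, regarding difficulty: the crude inequality $|a_\pi(p)|\le\frac12(1+a_\pi(p)^2)$ only delivers $\kappa\le1$ (with equality, since $\sum_{p\le x}a_\pi(p)^2/p\sim\log\log x$ by Clebsch--Gordan), so the entire content lies in the \emph{strict} gain $c_m<1$, which forces the use of the genuine distribution of $(\theta_p^f,\theta_p^g)$ rather than a second-moment estimate. Accordingly the main obstacle is the effective joint Sato--Tate input: controlling, with a uniform $O(1)$ and the correct pole order, each of the finitely many Rankin--Selberg partial sums, which rests on the automorphy of all symmetric powers of $f$ and $g$ together with the standard holomorphy and non-vanishing on $\Re s=1$ of the associated $L$-functions, and on the majorant being tight enough that the added $\epsilon$ leaves $\kappa<1$.
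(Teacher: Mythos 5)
Your proposal is correct in substance, and it reaches the theorem by a genuinely different route than the paper at the key analytic step. Both arguments share the same skeleton: reduce, via a Shiu/Halberstam--Richert mean-value bound for the non-negative multiplicative function $|\mu_\pi(n)|$ (the paper uses Shiu's theorem, its Lemma \ref{Shiu}), to showing $\sum_{p\le x}|\mu_\pi(p)|/p\le\kappa\log\log x+O(1)$ with $\kappa<1$, and both ultimately rest on Newton--Thorne automorphy of symmetric powers, orthogonality of distinct cuspidal representations (Lemma \ref{orthognality relations for L functions}), and the fact that $\Sym^{k_1}(f)\cong\Sym^{k_2}(g)$ forces $k_1=k_2=0$ (Lemma \ref{multiplicity one}). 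Where you diverge is in how the strict saving $\kappa<1$ is extracted. The paper uses the Tang--Wu polynomial inequality $\sqrt{t}\le a_0+a_1t+a_2t^2$ on $[0,1]$ (Lemma \ref{inequality}) with $t=(\lambda_\pi(p)/d_\pi)^2$, so the whole estimate reduces to the \emph{second and fourth moments} $\sum_{p\le x}\lambda_\pi(p)^2/p=\log\log x+O(1)$ and $\sum_{p\le x}\lambda_\pi(p)^4/p=d_\pi\log\log x+O(1)$, computed by Clebsch--Gordan decomposition of $\pi\times\pi$ and $\pi\times\pi\times\pi\times\pi$ into symmetric-power Rankin--Selberg factors; this yields a clean, fully explicit $\eta_{m_1,m_2}$. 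You instead run the classical Rankin/Elliott--Moreno--Shahidi scheme: majorize $|U_{m_1}(\cos\theta^f_p)U_{m_2}(\cos\theta^g_p)|$ by a product of polynomial majorants, exploit orthonormality of the Chebyshev system for the Sato--Tate measure to identify the constant term, and estimate every non-constant Chebyshev mode by orthogonality of $\Sym^{k_1}(f)\times\Sym^{k_2}(g)$. Your route buys a sharper exponent --- in the limit $\epsilon\to0$ it gives $\eta=1-c_{m_1}c_{m_2}$ (e.g.\ $1-(8/3\pi)^2\approx0.28$ for $m_1=m_2=1$, versus the paper's $\eta_{1,1}\approx0.065$), which is the optimal constant predicted by joint Sato--Tate --- at the price of needing symmetric powers of degree as large as the majorants' degrees and a less explicit final constant; the paper's route buys explicitness and lighter bookkeeping, needing only symmetric powers up to $4\max(m_1,m_2)$. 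Two small corrections to your write-up: you do not need Beurling--Selberg--Vaaler machinery for the majorants (since $|U_m(\cos\theta)|$ is continuous on $[0,\pi]$, Weierstrass approximation plus adding $\epsilon/2$ already produces them), and your closing claim that a strict gain \emph{forces} use of the genuine joint distribution is not accurate: the paper demonstrates that combining the second moment with the fourth moment (whose size $d_\pi\log\log x$ exceeds what a single Gaussian-type heuristic from the second moment alone would give) suffices, via a fixed degree-two polynomial inequality, to beat the trivial constant $1$.
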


Since deterministic sequences are bounded, this theorem immediately implies the following

\begin{corollary}\label{weak orthogonality rankin selberg}
Let $f,g$ be two distinct holomorphic Hecke eigenforms for SL$(2,\bZ)$. Let $m_1,m_2 \geq 0$ be natural numbers, not both zero. Let $\xi(n)$ be any deterministic sequence. Then there exists some constant $\eta_{m_1,m_2} > 0$, depending only on $m_1$ and $m_2$ such that 
$$\frac{1}{N}\sum_{n=1}^N\mu_{\Sym^{m_1}(f) \times \Sym^{m_2}(g)}(n)\xi(n) = O_\xi\left(\frac{1}{\log^{\eta_{m_1,m_2}}N}\right),$$
where the implied constant depends only on the sup-norm of $\xi$.
\end{corollary}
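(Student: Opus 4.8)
The plan is to deduce Corollary~\ref{weak orthogonality rankin selberg} directly from Theorem~\ref{absolute rankin selberg}. First I would record the one fact about deterministic sequences that is actually used, namely that they are bounded: if $\xi(n)=f(T^nx)$ with $f$ continuous on the compact space $X$, then $|\xi(n)|\le\sup_{y\in X}|f(y)|=\|\xi\|_\infty<\infty$. Writing $\pi=\Sym^{m_1}(f)\times\Sym^{m_2}(g)$ for brevity, the triangle inequality then gives
\[
\left|\frac{1}{N}\sum_{n=1}^N\mu_\pi(n)\xi(n)\right|\le\|\xi\|_\infty\cdot\frac{1}{N}\sum_{n=1}^N|\mu_\pi(n)|,
\]
and the right-hand side is $O_\xi\big((\log N)^{-\eta_{m_1,m_2}}\big)$ by Theorem~\ref{absolute rankin selberg}, with the same exponent $\eta_{m_1,m_2}$ and an implied constant that is a multiple of $\|\xi\|_\infty$. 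This reduction is the entire proof of the Corollary; no further input is needed.

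Thus the Corollary is a formal consequence, and the substance lives in Theorem~\ref{absolute rankin selberg}, whose proof I would organize as follows. Since $L(s,\pi)^{-1}$ has an Euler product, $\mu_\pi$ is multiplicative and supported on integers all of whose prime exponents are at most $\deg\pi=(m_1+1)(m_2+1)$, with $|\mu_\pi(p)|=|\lambda_{\Sym^{m_1}(f)}(p)|\,|\lambda_{\Sym^{m_2}(g)}(p)|$. The mean value of a bounded non-negative multiplicative function is controlled, via a Wirsing- or Selberg--Delange-type theorem, by the prime average $\kappa:=\lim_{x\to\infty}(\log\log x)^{-1}\sum_{p\le x}|\mu_\pi(p)|/p$, which yields $\frac1N\sum_{n\le N}|\mu_\pi(n)|\asymp(\log N)^{\kappa-1}$. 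Writing the Satake angles as $\alpha_p=e^{i\theta_p^f}$, $\beta_p=e^{i\theta_p^g}$ and using that $\lambda_{\Sym^m}(\theta)=U_m(\cos\theta)$ is the $m$-th Chebyshev polynomial of the second kind, joint Sato--Tate equidistribution for the distinct forms $f,g$ factors the average as $\kappa=\big(\int|U_{m_1}|\,d\mu_{ST}\big)\big(\int|U_{m_2}|\,d\mu_{ST}\big)$, where $\mu_{ST}$ is the Sato--Tate measure.

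Because the $U_m$ are orthonormal for $\mu_{ST}$, Cauchy--Schwarz gives $\int|U_m|\,d\mu_{ST}\le(\int U_m^2\,d\mu_{ST})^{1/2}=1$, with strict inequality for every $m\ge1$ since $|U_m|$ is then non-constant on the full support of $\mu_{ST}$, while the factor equals $1$ when $m=0$. Hence $\kappa<1$ precisely when $(m_1,m_2)\ne(0,0)$, and one may take $\eta_{m_1,m_2}=1-\kappa>0$. The main obstacle I anticipate is the analytic input behind the mean-value step: the absolute values $|U_{m_i}|$ are not Dirichlet coefficients of any $L$-function, so one cannot simply quote holomorphy of a single $L(s,\pi)$ to read off the asymptotic $(\log N)^{\kappa-1}$. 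Instead I would sandwich $|U_m|$ between finite linear combinations of the $U_j$ (equivalently, control $\sum_p|\mu_\pi(p)|p^{-s}$ near $s=1$) and feed in the known holomorphy and non-vanishing of the Rankin--Selberg symmetric-power $L$-functions $L(s,\Sym^a(f)\times\Sym^b(g))$, together with a routine estimate for the negligible contribution of prime powers $p^k$ with $k\ge2$. Securing a power-of-logarithm saving that is uniform in $N$, rather than an unspecified $o(1)$, is the delicate point, and it is exactly what forces a quantitative (effective) form of the equidistribution rather than its merely qualitative form.
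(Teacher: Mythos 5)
Your deduction of the Corollary coincides exactly with the paper's own proof: deterministic sequences are bounded (a continuous $f$ on the compact space $X$ has finite sup-norm), so the triangle inequality reduces everything to Theorem \ref{absolute rankin selberg}, with an implied constant that is $\|\xi\|_\infty$ times the constant in that theorem. This part is correct and is precisely how the paper treats it (the paper states the Corollary as an immediate consequence, with no further argument).

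Your appended sketch of Theorem \ref{absolute rankin selberg} itself, however, takes a genuinely different route from the paper, so it is worth comparing. You propose effective joint Sato--Tate equidistribution for the distinct pair $(f,g)$, writing $\lambda_{\Sym^m(f)}(p)=U_m(\cos\theta_p^f)$ and identifying the decisive constant $\kappa$ as a product of integrals $\int|U_{m_i}|\,d\mu_{\mathrm{ST}}$. The paper never invokes equidistribution: it applies Shiu's theorem (Lemma \ref{Shiu}) to reduce to the prime sum $\sum_{p\le N}|\lambda_\pi(p)|/p$, then majorizes $\sqrt{t}$ by a \emph{quadratic} polynomial in $t$ (the Tang--Wu inequality, Lemma \ref{inequality}), so that only $\sum_{p\le N}\lambda_\pi(p)^2/p$ and $\sum_{p\le N}\lambda_\pi(p)^4/p$ are needed; these are evaluated as $\log\log N+O(1)$ and $d_\pi\log\log N+O(1)$ by decomposing $\pi\times\pi$ and $\pi\times\pi\times\pi\times\pi$ into symmetric-power Rankin--Selberg factors (Lemma \ref{decomposition}) and applying the orthogonality relations together with multiplicity one (Lemmas \ref{orthognality relations for L functions} and \ref{multiplicity one}). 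That yields an explicit, unconditional $\eta_{m_1,m_2}$ using only standard analytic properties of the relevant $L$-functions. Your route would, in principle, produce the optimal exponent $1-\kappa$, which is strictly larger than the paper's $\eta_{m_1,m_2}$, but it requires the effective (power-of-log) form of joint equidistribution that you yourself identify as the main obstacle -- a substantially heavier input. Note also that the ``sandwiching of $|U_m|$ by finite linear combinations of the $U_j$'' you propose as a workaround is, in essence, exactly what the Tang--Wu inequality accomplishes: truncating the majorant at degree $2$ in $t=(\lambda_\pi(p)/d_\pi)^2$ keeps every term expressible through Rankin--Selberg $L$-functions whose analytic behaviour is actually known, which is why the paper's argument closes unconditionally while the optimal-constant version remains delicate.
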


\begin{remark}
Following terminologies in \cite{GT}, Theorem \ref{absolute rankin selberg} implies that these M\"obius functions are weakly orthogonal to bounded sequences and Corollary \ref{weak orthogonality rankin selberg} states that $\mu_{\Sym^{m_1}(f) \times \Sym^{m_2}(g)}(n)$ is weakly orthogonal to any deterministic sequence $\xi(n)$.
\end{remark}

\begin{remark}
In the case $m_1=m_2=0$, one has the following well-known asymptotic formula
$$\sum_{n=1}^N |\mu(n)| \sim \frac{N}{\zeta(2)},\hspace{3mm} N \to \infty.$$
\end{remark}

For Maass forms, our result is the following

\begin{theorem}\label{absolute maass}
Let $\phi$ be a Hecke Maass form for SL$(2,\bZ)$. Then we have
$$\frac{1}{N}\sum_{n=1}^N|\mu_\phi(n)| = O\left(\frac{1}{\log^{1/12}N}\right).$$
\end{theorem}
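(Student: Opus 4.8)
The plan is to reduce the theorem to an estimate for the mean value of an explicit nonnegative multiplicative function, and then to control that mean value through moments of the Hecke eigenvalues of $\phi$ coming from symmetric-power and Rankin-Selberg $L$-functions. First I would unwind the definition of $\mu_\phi$: writing the Hecke eigenvalues of $\phi$ as $\lambda_\phi(n)$, the level-one local factors give $L(s,\phi)^{-1}=\prod_p\bigl(1-\lambda_\phi(p)p^{-s}+p^{-2s}\bigr)$, so $\mu_\phi$ is multiplicative with $\mu_\phi(p)=-\lambda_\phi(p)$, $\mu_\phi(p^2)=1$, and $\mu_\phi(p^k)=0$ for $k\ge 3$. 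Hence $g(n):=|\mu_\phi(n)|$ is a nonnegative multiplicative function supported on cube-free integers with $g(p)=|\lambda_\phi(p)|$ and $g(p^2)=1$. By a Hal\'asz-Shiu type upper bound for mean values of nonnegative multiplicative functions one has $\sum_{n\le N}g(n)\ll \frac{N}{\log N}\exp\bigl(\sum_{p\le N}g(p)/p\bigr)$, so it suffices to prove a bound of the shape $\sum_{p\le N}|\lambda_\phi(p)|/p\le(1-\eta)\log\log N+O(1)$ with $\eta\ge 1/12$; this immediately yields $\frac1N\sum_{n\le N}g(n)\ll(\log N)^{-\eta}$. (Since only an upper bound on the density is available, an upper-bound mean-value theorem is the appropriate tool here, rather than Selberg-Delange, which would require the exact density.)

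The second step is to compute the even power-sums $\sum_{p\le x}\lambda_\phi(p)^{2k}/p$. With $\lambda=\lambda_\phi(p)$, I would use the algebraic identities $\lambda_{\Sym^2\phi}(p)=\lambda^2-1$ and $\lambda_{\Sym^3\phi}(p)=\lambda^3-2\lambda$, whence $\lambda_{\Sym^2\phi}(p)^2=\lambda^4-2\lambda^2+1$ and $\lambda_{\Sym^3\phi}(p)^2=\lambda^6-4\lambda^4+4\lambda^2$, together with the facts that $\Sym^2\phi$ and $\Sym^3\phi$ are cuspidal (Gelbart-Jacquet, Kim-Shahidi), so that $L(s,\Sym^2\phi)$ and $L(s,\Sym^3\phi)$ are holomorphic and non-vanishing at $s=1$, while the self-dual Rankin-Selberg $L$-functions $L(s,\Sym^2\phi\times\Sym^2\phi)$ and $L(s,\Sym^3\phi\times\Sym^3\phi)$ have a simple pole there. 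Combining these (with standard prime-number-theorem estimates from the zero-free regions) one deduces
\[
\sum_{p\le x}\frac{\lambda_\phi(p)^{2k}}{p}=C_k\log\log x+O(1),\qquad k=0,1,2,3,
\]
where $C_k=1,1,2,5$ are the Catalan numbers. I emphasize that these identities are exact in the Satake parameters, so the computation is unconditional; in particular it does not require the Ramanujan conjecture (open for Maass forms), and the possibly large eigenvalues, bounded only by $|\lambda_\phi(p)|\ll p^{7/64}$ via Kim-Sarnak, cause no difficulty.

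The final step is an elementary majorization. I would produce a polynomial $P(x)=c_0+c_1x^2+c_2x^4+c_3x^6$ with $P(x)\ge|x|$ for all real $x$ and with $c_0+c_1+2c_2+5c_3=1-\eta$; summing the pointwise bound $|\lambda_\phi(p)|\le P(\lambda_\phi(p))$ against $1/p$ and inserting the moment asymptotics then gives exactly $\sum_{p\le x}|\lambda_\phi(p)|/p\le(1-\eta)\log\log x+O(1)$, as required. Determining the best $\eta$ is a small linear program, equivalently a truncated moment problem, whose optimum (the best degree-six majorant against the Catalan moments) is $\eta=1-2/\sqrt5\approx 0.106$; any admissible $P$ with $c_0+c_1+2c_2+5c_3\le 11/12$ already delivers the stated exponent $1/12$.

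The main obstacle is that the sixth moment is genuinely needed. Using only the second and fourth moments (that is, $\Sym^2\phi$ and its Rankin-Selberg square) the best admissible constant is the minimum of $c_0+c_1+2c_2$ subject to $c_0+c_1x^2+c_2x^4\ge|x|$, which equals about $0.935$ and yields only $\eta\approx 0.065<1/12$; the extra saving comes precisely from the $\Sym^3\times\Sym^3$ input. A secondary technical point is to justify the Hal\'asz-Shiu bound for a multiplicative function whose prime values $|\lambda_\phi(p)|$ are not uniformly bounded; this I would handle using $|\lambda_\phi(p)|\ll p^{7/64}$ together with the Rankin-Selberg estimate $\sum_{p\le x}|\lambda_\phi(p)|\log p\ll x$, which supplies the Chebyshev-type hypothesis required by the relevant form of the theorem, the prime-square contributions being of lower order.
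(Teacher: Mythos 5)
Your proposal is correct, and its architecture coincides with the paper's: a Shiu/Hal\'asz-type mean-value bound for the nonnegative multiplicative function $|\mu_\phi(n)|$ (the paper's Proposition~\ref{non-ramanujan lemma}, proved via Lemma~\ref{Maass form Shiu}, with the Chebyshev-type hypothesis supplied exactly as you say, by Rankin--Selberg plus Cauchy--Schwarz, and prime powers controlled by bounds toward Ramanujan --- your Kim--Sarnak $7/64$ versus the paper's Luo--Rudnick--Sarnak $1/(d^2+1)$ is immaterial), followed by the reduction to $\sum_{p\le N}|\lambda_\phi(p)|/p\le\frac{11}{12}\log\log N+O(1)$ via a degree-six even polynomial majorant of $|x|$ tested against the moments $1,1,2,5$. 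Where you genuinely diverge is in both remaining steps, and the differences are worth recording. For the moments, you use the Rankin--Selberg squares $\lambda_{\Sym^2\phi}(p)^2=\lambda^4-2\lambda^2+1$ and $\lambda_{\Sym^3\phi}(p)^2=\lambda^6-4\lambda^4+4\lambda^2$, so you need only cuspidality of $\Sym^2\phi$ (Gelbart--Jacquet) and $\Sym^3\phi$ (Kim--Shahidi); the paper instead expands $\lambda^4=2+3\lambda_{\Sym^2\phi}(p)+\lambda_{\Sym^4\phi}(p)$ and $\lambda^6=5+8\lambda_{\Sym^2\phi}(p)+4\lambda_{\Sym^4\phi}(p)+\lambda_{\Sym^2\phi}(p)\lambda_{\Sym^4\phi}(p)$, which additionally invokes Kim's cuspidality of $\Sym^4\phi$ before converting the cross term into $\lambda_{\Sym^3\phi}(p)^2-1$; your route is thus more economical in functoriality input. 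One caution: your phrase ``standard prime-number-theorem estimates from the zero-free regions'' hides the real issue, namely that without Ramanujan the passage from the simple pole of $L(s,\pi\times\pi)$ to $\sum_{p\le x}\lambda_\pi(p)^2/p=\log\log x+O(1)$ requires controlling prime-power terms; this is precisely what the $d\le 4$ orthogonality relation (Lemma~\ref{orthognality relations for L functions}(b), i.e.\ Liu--Wang--Ye) provides, and both $\Sym^2\phi$ and $\Sym^3\phi$ fall within its scope, so your argument closes once routed through it. For the majorant, the paper quotes Holowinsky's explicit inequality $t^{1/2}\le 1+\frac12(t-1)-\frac19(t-1)^2+\frac1{36}(t-1)^3$, whose coefficients paired with the moments give exactly $\frac{13}{36}+\frac{29}{36}+2\cdot(-\frac{7}{36})+5\cdot\frac1{36}=\frac{11}{12}$; you instead pose the linear program and assert its optimum is $1-2/\sqrt5$, which is indeed correct (the dual two-point measure at $t=(3\pm\sqrt5)/2$ with weights $(\sqrt5\pm1)/(2\sqrt5)$ matches the moments $1,1,2,5$ and has value $2/\sqrt5\approx0.894<\frac{11}{12}$), but a complete write-up must still exhibit one feasible polynomial with $c_0+c_1+2c_2+5c_3\le\frac{11}{12}$ rather than appeal to the unproved optimum --- Holowinsky's inequality is exactly such a certificate, and citing it (or solving your tangency conditions explicitly) is the only missing detail in an otherwise sound and slightly leaner proof.
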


\begin{corollary}
Let $\phi$ be a Hecke Maass form for SL$(2,\bZ)$. Let $\xi(n)$ be any deterministic sequence. Then we have
$$\frac{1}{N}\sum_{n=1}^N\mu_\phi(n)\xi(n) = O_{\xi}\left(\frac{1}{\log^{1/12}N}\right),$$
where the implied constant depends only on the sup-norm of $\xi$.
\end{corollary}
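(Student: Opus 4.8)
Since every deterministic sequence is bounded, the stated corollary follows at once from Theorem \ref{absolute maass}: for bounded $\xi$,
\[
\left|\frac1N\sum_{n=1}^N\mu_\phi(n)\xi(n)\right|\le\|\xi\|_\infty\cdot\frac1N\sum_{n=1}^N|\mu_\phi(n)|=O_\xi\!\left(\frac{1}{\log^{1/12}N}\right)
\]
by the theorem. Thus all the content lies in Theorem \ref{absolute maass}, and the plan below is for that statement.

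First I would make $\mu_\phi$ explicit. From the Euler product $L(s,\phi)=\prod_p(1-\lambda_\phi(p)p^{-s}+p^{-2s})^{-1}$, where $\lambda_\phi(n)$ are the normalized Hecke eigenvalues, inversion gives a multiplicative $\mu_\phi$ with $\mu_\phi(p)=-\lambda_\phi(p)$, $\mu_\phi(p^2)=1$ and $\mu_\phi(p^k)=0$ for $k\ge3$. Hence $|\mu_\phi|$ is a non-negative multiplicative function, supported on cubefree integers, with $|\mu_\phi(p)|=|\lambda_\phi(p)|$. For such functions the Halberstam--Richert (or Shiu) upper bound yields
\[
\frac1N\sum_{n\le N}|\mu_\phi(n)|\ll\frac{1}{\log N}\exp\!\left(\sum_{p\le N}\frac{|\lambda_\phi(p)|}{p}\right),
\]
valid since $|\lambda_\phi(n)|\ll_\varepsilon n^\varepsilon$ and $\sum_{p\le x}|\lambda_\phi(p)|\log p\ll x$ (the latter by Cauchy--Schwarz and $\sum_{p\le x}\lambda_\phi(p)^2\log p\ll x$). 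The theorem therefore reduces to the prime-sum estimate $\sum_{p\le N}|\lambda_\phi(p)|/p\le\tfrac{11}{12}\log\log N+O(1)$; equivalently, the mean value of $|\lambda_\phi(p)|$ over primes is at most $\tfrac{11}{12}$.

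This prime-sum bound is the heart of the matter, and the step I expect to be hardest. Cauchy--Schwarz is of no use: since $\sum_{p\le x}\lambda_\phi(p)^2/p\sim\log\log x$ --- from $L(s,\phi\times\phi)=\zeta(s)L(s,\Sym^2\phi)$ and the non-vanishing of $L(s,\Sym^2\phi)$ on $\Re s=1$ --- it returns only the trivial constant $1$, and moment inequalities bound \emph{low} moments from below, not above. I would instead use a global polynomial majorant. Using $L(s,\Sym^a\phi)$ for $a\le4$ (Gelbart--Jacquet, Kim--Shahidi) and their Rankin--Selberg convolutions $L(s,\Sym^a\phi\times\Sym^b\phi)$ with $a,b\le4$, together with Shahidi's non-vanishing theorem, one obtains the moment asymptotics
\[
\sum_{p\le x}\frac{\lambda_\phi(p)^{2k}}{p}=(C_k+o(1))\log\log x,\qquad 1\le k\le4,
\]
where $C_k=\frac{1}{k+1}\binom{2k}{k}$ is the $k$-th Catalan number (so $C_1,\dots,C_4=1,2,5,14$, the even Sato--Tate moments); the convolution $L(s,\Sym^4\phi\times\Sym^4\phi)$ reaches the eighth moment, the highest accessible for a Maass form, its simple pole at $s=1$ and non-vanishing on $\Re s=1$ letting one isolate each $\Sym^{2j}\phi$, $j\le4$.

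Finally I would fix an even polynomial $P(t)=\sum_{k=0}^{4}c_kt^{2k}$ with positive leading coefficient, $P(t)\ge|t|$ for \emph{all} real $t$, and $\sum_{k=0}^{4}c_kC_k\le\tfrac{11}{12}$; applying it termwise gives
\[
\sum_{p\le x}\frac{|\lambda_\phi(p)|}{p}\le\sum_{k=0}^{4}c_k\sum_{p\le x}\frac{\lambda_\phi(p)^{2k}}{p}=\Big(\sum_{k=0}^4 c_kC_k+o(1)\Big)\log\log x\le\tfrac{11}{12}\log\log x+o(\log\log x),
\]
which is the required estimate. Demanding $P\ge|t|$ on the whole line, not merely on $[-2,2]$, is exactly what frees the argument from the Ramanujan conjecture (open for Maass forms); the cost is that $P$ must have degree $8$, which is precisely why the eighth moment is needed --- a degree-$4$ majorant constrained only by the second and fourth moments cannot beat the constant $1$, since mass may escape to infinity, whereas the third and fourth moment constraints block such escape and force a genuine Jensen defect bounded away from $0$. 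Producing such a $P$ is a finite linear program, dual to maximizing $\int_{\bR}|t|\,d\nu(t)$ over probability measures $\nu$ matching the first four even Sato--Tate moments; exhibiting one explicit admissible $P$ suffices. I expect the genuine obstacle to be precisely this: obtaining an unconditional constant strictly below $1$ --- here $\tfrac{11}{12}$ --- for the average of $|\lambda_\phi(p)|$, which is what forces the simultaneous use of all the symmetric-power $L$-functions up to the fourth and their convolutions, rather than the second moment alone.
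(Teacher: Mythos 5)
Your reduction of the corollary to Theorem \ref{absolute maass} is correct and is exactly the paper's (implicit) argument: deterministic sequences are bounded, so the triangle inequality finishes it. Since you folded a plan for Theorem \ref{absolute maass} into the proposal, I review that as well; its overall architecture --- a Shiu/Halberstam--Richert-type reduction to the prime sum, then an even polynomial majorant $P(t)\ge |t|$ valid on \emph{all} of $\bR$ (to dodge Ramanujan) paired with moment asymptotics from symmetric-power $L$-functions --- is exactly the paper's strategy. But the execution has a genuine gap.

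The gap is your reliance on the eighth-moment asymptotic $\sum_{p\le x}\lambda_\phi(p)^{8}/p=(14+o(1))\log\log x$. This is precisely what is \emph{not} unconditionally available for Maass forms. Writing $\lambda_\phi(p)^8=\bigl(2+3\lambda_{\Sym^2\phi}(p)+\lambda_{\Sym^4\phi}(p)\bigr)^2$, every resulting term can be handled except $\lambda_{\Sym^4\phi}(p)^2$: this requires the orthogonality relation for $\Sym^4\phi$ paired with itself, a degree-$5$ representation, and Lemma \ref{orthognality relations for L functions} applies only under Ramanujan (open for $\Sym^4$ of a Maass form) or degree $\le 4$. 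Equivalently, isolating the constituents of $\Sym^4\phi\times\Sym^4\phi$ would require automorphy of $\Sym^6\phi$ and $\Sym^8\phi$, unknown for Maass forms (Newton--Thorne is a holomorphic-form result; the paper explicitly notes cuspidality only for $\Sym^j\phi$, $j\le 4$). The pole of $L(s,\Sym^4\phi\times\Sym^4\phi)$ at $s=1$ and Shahidi non-vanishing do not by themselves yield the prime sum, because the $k=2$ terms in $\log L$ cannot be controlled in degree $5$ with only Luo--Rudnick--Sarnak bounds.

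Moreover, your claim that $P$ must have degree $8$ is wrong, and this is exactly how the paper escapes: degree $6$ suffices. The paper uses Ho's inequality $t^{1/2}\le 1+\tfrac{1}{2}(t-1)-\tfrac{1}{9}(t-1)^2+\tfrac{1}{36}(t-1)^3$ for $t\ge 0$, a degree-$6$ even majorant of $|\lambda|$ on all of $\bR$, and the moments it needs are the second (Rankin--Selberg), the fourth (via $\lambda^4=2+3\lambda_{\Sym^2\phi}+\lambda_{\Sym^4\phi}$ and cuspidality of $\Sym^2\phi,\Sym^4\phi$), and the sixth, whose only dangerous term $\lambda_{\Sym^2\phi}(p)\lambda_{\Sym^4\phi}(p)=\lambda_{\Sym^3\phi}(p)^2-1$ is handled by the degree-$\le 4$ case of Lemma \ref{orthognality relations for L functions} applied to $\Sym^3\phi$. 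This yields $\sum_{p\le x}|\lambda_\phi(p)|/p\le \tfrac{11}{12}\log\log x+O(1)$, i.e.\ your target constant, without ever touching the eighth moment: in your LP language, the third-moment constraint ($C_3=5$) already blocks the escape of mass to infinity. A secondary issue: you justify the Shiu-type bound by ``$|\lambda_\phi(n)|\ll_\ve n^\ve$'', which is essentially the Ramanujan conjecture and is open for Maass forms; the correct route (which you also gesture at) needs only $\sum_{p\le x}|\lambda_\phi(p)|\log p\ll x$ plus control of prime-power values, and making this rigorous is the content of the paper's Proposition \ref{non-ramanujan lemma}, which uses Lemma \ref{Maass form Shiu} rather than Lemma \ref{Shiu}.
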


\section{Preliminaries}
Let $f$ be a holomorphic Hecke eigenform for SL$(2,\bZ)$ of weight $k$. This is a holomorphic function on the upper half plane $\cH = \{z = x+iy:x \in \bR, y> 0\}$ with a Fourier expansion
$$f(z) = \sum_{n=1}^\infty \lambda_f(n)n^\frac{k-1}{2}e(nz),\hspace{3mm} \lambda_f(1) = 1.$$
The Hecke $L$-function of $f$ is defined by
$$L(s,f) = \sum_{n=1}^\infty \frac{\lambda_f(n)}{n^s},\hspace{3mm} \Re(s) > 1,$$
which admits an analytic continuation to $\bC$ with a functional equation $s \to 1-s$. By Hecke theory, $L(s,f)$ has a Euler product expansion
$$L(s,f) = \prod_p \left(1-\frac{\alpha_f(p)}{p^s}\right)^{-1}\left(1-\frac{\beta_f(p)}{p^s}\right)^{-1},\hspace{3mm} \Re(s) > 1,$$
where $\alpha_f(p)\beta_f(p) = 1$ and $\alpha_f(p) + \beta_f(p) = \lambda_f(p)$.
Let $m \geq 0$ be a natural number. The symmetric $m^{th}$ power $L$-function of $f$ is defined by
$$L(s,\Sym^m(f)) = \prod_p\prod_{i=0}^m \left(1-\frac{\alpha_f(p)^i\beta_f(p)^{m-i}}{p^s}\right)^{-1},\hspace{3mm} \Re(s) > 1.$$
It was recently proved by J. Newton and J. Thorne \cite{NT} that $L(s,\Sym^m(f))$ comes from an cuspidal automorphic representation of GL$(m+1,\bA_\bQ)$, denoted by $\Sym^m(f)$. Thus for two forms $f,g$ and two natural numbers $m_1,m_2$, the Rankin-Selberg $L$-function 
$$L(s,\Sym^{m_1}(f) \times \Sym^{m_2}(g)) = \prod_p \prod_{i=0}^{m_1}\prod_{j=0}^{m_2} \left(1-\frac{\alpha_f(p)^i\beta_f(p)^{m_1-i}\alpha_g(p)^j\beta_g(p)^{m_2-j}}{p^s}\right)^{-1}$$
has a meromorphic continuation to $\bC$, with a functional equation $s \to 1-s$. It has a simple pole at $s=1$ if and only if $\Sym^{m_1}(f) \cong \Sym^{m_2}(g)$ as automorphic representations.
\par
Let $\phi$ be Hecke Maass form for SL$(2,\bZ)$ with Laplacian eigenvalue $\lambda = \frac{1}{4}+t^2$. This is a real-analytic function on the upper half plane $\cH$, which has a Fourier expansion
$$\phi(z) = \sqrt{y}\sum_{n \neq 0}\lambda_\phi(n)K_{it}(2\pi|n|y)e(nx)$$
where $z = x+iy$ and $K_{it}$ is the $K$-Bessel function. The $L$-function of $\phi$ is defined as
$$L(s,\phi) = \sum_{n=1}^\infty \frac{\lambda_\phi(n)}{n^s},\hspace{3mm} \Re(s)>1.$$
It also has an analytic continuation to $\bC$ with a functional equation $s \to 1-s$, and an Euler product expansion. Symmetric powers of Maass forms are defined in the same way as in the holomorphic case. But $\Sym^j(\phi)$ are proved to be cuspidal only for $j \leq 4$ (see \cite{GelbartJacquet1978},\cite{KimShahidi2000} and \cite{Kim2003}).
\par
We now recall two operations on $L$-functions. For a general notation of $L$-functions, see \cite{IwKo2004} Section 5.1. Let $L(s,\pi_1), L(s,\pi_2)$ be two $L$-functions of degree $d_1,d_2$ respectively with Dirichlet series expansions
$$L(s,\pi_i) = \sum_{n=1}^\infty \frac{\lambda_{\pi_i}(n)}{n^s},\hspace{3mm} \Re(s)>1$$
and Euler product expansions
$$L(s,\pi_i) = \prod_p \prod_{j=1}^{d_i}\left(1-\frac{\alpha_{\pi_i,j}(p)}{p^s}\right)^{-1},\hspace{3mm} \Re(s)>1$$
where $i=1,2$. We set
$$L(s,\pi_1 \boxplus \pi_2) = L(s,\pi_1)L(s,\pi_2)$$
and
$$L(s,\pi_1 \times \pi_2) = \prod_p\prod_{j_1=1}^{d_1}\prod_{j_2=1}^{d_2}\left(1-\frac{\alpha_{\pi_1,j_1}(p)\alpha_{\pi_2,j_2}(p)}{p^s}\right)^{-1},\hspace{3mm} \Re(s) \gg 1.$$
In particular, if $\pi_1, \pi_2$ are everywhere unramified cuspidal representations on GL$(d_1,\bA_\bQ)$ and GL$(d_2,\bA_\bQ)$ respectively, then $\pi_1 \boxplus \pi_2$ is their isobaric sum, and $L(s,\pi_1 \times \pi_2)$ is the Rankin-Selberg $L$-function of $\pi_1$ and $\pi_2$. For a prime $p$, the following relations hold:
\begin{align*}
    \lambda_{\pi_1 \boxplus \pi_2}(p) &= \lambda_{\pi_1}(p) + \lambda_{\pi_2}(p), \\
    \lambda_{\pi_1 \times \pi_2}(p) &= \lambda_{\pi_1}(p)\lambda_{\pi_2}(p).
\end{align*}
\par
We then collect some lemmas that will be used in the proof of Theorem \ref{absolute rankin selberg} and Theorem \ref{absolute maass}.
\par
The first lemma is due to P. Shiu (\cite[Theorem~1]{S}), which enables us to reduce a sum of certain multiplicative functions over positive integers to a sum over prime numbers.
\begin{lemma}\label{Shiu}
    Let $f:\bN \to \bC$ be a non-negative multiplicative function with the following properties
    \begin{itemize}
        \item There exists some $A_1 > 0$ such that for any prime $p$ and $l \geq 1$, $f(p^l) \leq A_1^l.$
        \item For any $\ve > 0$, $f(n) \ll_\ve n^\ve.$
    \end{itemize}
    Let $0 <a <k$ be integers with $(a,k)=1$. Let $0 < \alpha,\beta <\frac{1}{2}$ be real numbers. Then
    $$\sum_{\substack{x-y<n\leq x\\ n \equiv a(\mo k)}}f(n) \ll \frac{y}{\phi(k)} \frac{1}{\log x} \exp\left(\sum_{p \leq x, p \nmid k}\frac{f(p)}{p}\right)$$
    uniformly in $a,k,y$, provided $k < y^{1-\alpha}$ and $x^\beta < y \leq x.$
\end{lemma}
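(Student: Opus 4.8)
The plan is to prove Shiu's bound by a sieve-plus-decomposition argument that reduces the short-interval sum in the progression to a product of a ``smooth part'' governed by Mertens' theorem and a ``rough part'' counted by a Brun--Titchmarsh-type sieve. First I would fix a threshold $z = y^{1/u}$ for a large constant $u = u(\alpha,\beta,A_1)$ to be chosen, and factor each $n$ uniquely as $n = dm$, where $d$ is the largest $z$-smooth divisor of $n$ (the product of all $p^{v_p(n)}$ with $p \le z$) and $m$ collects the prime factors exceeding $z$. Since $m \le x$ and every prime dividing $m$ exceeds $z \ge x^{\beta/u}$, we get $\Omega(m) \le u/\beta = O(1)$, and by multiplicativity $f(n) = f(d)f(m)$. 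Note also that on the surviving terms $(a,k)=1$ forces $(n,k)=1$, hence $(d,k)=1$.

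Next I would discard the non-squarefree and high prime-power contributions by Rankin's trick: using $f(p^l) \le A_1^l$ together with $f(n) \ll_\epsilon n^\epsilon$, the integers $n$ in the range divisible by $p^2$ for some $p > z$, or carrying an unusually large powerful part, contribute negligibly against the claimed main term. This reduces matters to $n = dm$ with $m$ squarefree and built from $O(1)$ primes exceeding $z$. The analytic core is then, for each fixed $z$-smooth $d$, to estimate $\sum_m f(m)$ over admissible $m$ lying in $\bigl(\tfrac{x-y}{d}, \tfrac{x}{d}\bigr]$ with $m \equiv a\bar{d} \pmod k$ and all prime factors $> z$. A sieve upper bound of Brun--Titchmarsh type, uniform for $k \le y^{1-\alpha}$ and for the short interval, gives a count $\ll \frac{y}{d\,\phi(k)}\prod_{p \le z,\,p \nmid k}\bigl(1 - \tfrac1p\bigr)$ of such $m$, and by Mertens the sieve factor is $\asymp 1/\log z \asymp u/\log x$.

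To recover the full weighted sum I would expand $f(m) = \prod_{p \mid m} f(p)$ over the boundedly many large prime factors of $m$, summing each prime over $(z,x]$ weighted by $f(p)/p$; the factorials arising from the ordering of the primes assemble the series for $\exp$, producing the complementary factor $\exp\bigl(\sum_{z < p \le x,\,p \nmid k} f(p)/p\bigr)$. Meanwhile summing $f(d)/d$ over $z$-smooth $d$ yields $\ll \exp\bigl(\sum_{p \le z,\,p \nmid k} f(p)/p\bigr)$. Combining these two with the sieve saving $1/\log x$ gives exactly $\frac{y}{\phi(k)}\,\frac{1}{\log x}\,\exp\bigl(\sum_{p \le x,\,p \nmid k} f(p)/p\bigr)$, the bounded constant $u$ being absorbed into the implied constant.

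The hard part will be the core estimate: obtaining the Brun--Titchmarsh/sieve bound fully uniformly in the modulus $k$ (up to $y^{1-\alpha}$) and in the short interval $(x-y,x]$, and then doing the bookkeeping that converts the mere \emph{count} of rough $m$ into the weighted sum $\sum_m f(m)$, so that the large primes contribute precisely $\exp\bigl(\sum_{z<p\le x} f(p)/p\bigr)$ rather than some uncontrolled quantity. Here condition (ii), $f(n) \ll_\epsilon n^\epsilon$, is essential both to close Rankin's trick and to keep the $O(1)$ large prime factors from inflating the bound; the constraints $0 < \alpha,\beta < \tfrac12$ with $k < y^{1-\alpha}$ and $x^\beta < y \le x$ are exactly what keep the sieve's level of distribution and the choice of $z = y^{1/u}$ mutually compatible.
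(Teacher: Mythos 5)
The paper offers no proof of this lemma at all: it is quoted verbatim as Theorem~1 of Shiu's paper [S], so the only meaningful comparison is with Shiu's original argument. Your sketch does follow that argument's outline (factorization $n=dm$ into $z$-smooth and $z$-rough parts with $z=y^{1/u}$, a fundamental-lemma/Brun--Titchmarsh sieve for the rough part, Rankin-type reductions, and Mertens' theorem producing the $1/\log x$ and the exponential factor). One remark on your bookkeeping: the elaborate expansion of $f(m)$ over its large prime factors is unnecessary, since every prime factor of $m$ exceeds $z > x^{\beta/u}$ forces $\Omega(m)\le u/\beta$, hence $f(m)\le A_1^{\Omega(m)}=O(1)$; moreover the resulting discrepancy between $\exp\bigl(\sum_{p\le z}f(p)/p\bigr)$ and $\exp\bigl(\sum_{p\le x}f(p)/p\bigr)$ is only a bounded factor, because $\sum_{z<p\le x}f(p)/p\le A_1\log(u/\beta)+O(1)$.

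The genuine gap is the step where, ``for each fixed $z$-smooth $d$,'' you invoke the sieve bound $\ll \frac{y}{d\,\phi(k)}\prod_{p\le z,\,p\nmid k}\bigl(1-\frac1p\bigr)$. This is unavailable when $d$ is large: the sieve requires the interval $\bigl(\frac{x-y}{d},\frac{x}{d}\bigr]$, intersected with the progression mod $k$, to contain about $y/(dk)$ integers and that quantity to exceed a fixed power of $z$; since $k$ may be as large as $y^{1-\alpha}$, this restricts you to $d\le y^{\alpha}z^{-O(1)}$, and for larger $d$ the claimed count is simply false (indeed $y/d$ can be smaller than $1$). Such large $d$ are not removed by your Rankin reductions, which only discard $n$ divisible by $p^2$ with $p>z$ or carrying a large powerful part: a $z$-smooth part can be huge while being squarefree (a product of many distinct small primes). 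Handling exactly these $n$ --- those with a large smooth part --- is the heart of Shiu's proof; it requires smooth-number density estimates (Rankin's trick applied to $\Psi(x,z)$-type counts, i.e.\ de Bruijn's bounds) combined with the hypothesis $f(n)\ll_\ve n^\ve$ to show that their total contribution is negligible against the main term. Without that case analysis, and without making the splitting threshold for $d$ depend on $\alpha$ as above, your argument does not close.
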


\par
Next, we state an inequality proved by H. Tang and J. Wu (\cite[Lemma~3.3]{TaWu2016})
\begin{lemma}\label{inequality}
Let $m \geq 1$ be a positive integer. Set
\begin{align*}
    \kappa_m &= (m+2)(m+1), \\
    a_0(m) &= \frac{(\kappa_m-3)\sqrt{\kappa_m}+2}{2(\kappa_m-1)^2}, \\
    a_1(m) &= \frac{(\kappa_m^2+3)\sqrt{\kappa_m}-4\kappa_m}{2(\kappa_m-1)^2}, \\
    a_2(m) &= -\frac{(\kappa_m^2+\kappa_m)\sqrt{\kappa_m}-2\kappa_m^2}{2(\kappa_m-1)^2}.
\end{align*}
Then for $0 \leq t \leq 1$ we have
$$\sqrt{t} \leq a_0(m) + a_1(m)t + a_2(m)t^2.$$
\end{lemma}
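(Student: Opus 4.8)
The plan is to reduce the claimed polynomial bound to the non-negativity of a single quartic and then to exhibit an explicit factorization of that quartic. Throughout write $\kappa = \kappa_m = (m+1)(m+2)$, which satisfies $\kappa \geq 6 > 1$ because $m \geq 1$, so every displayed expression is well defined; abbreviate $a_i = a_i(m)$. Substituting $t = u^2$ with $u = \sqrt{t} \in [0,1]$ (a bijection of $[0,1]$ onto itself), the asserted inequality $\sqrt{t} \leq a_0 + a_1 t + a_2 t^2$ is equivalent to
$$Q(u) := a_2 u^4 + a_1 u^2 - u + a_0 \geq 0 \qquad (0 \leq u \leq 1).$$
Thus it suffices to show that this quartic is non-negative on $[0,1]$.

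The key observation is that $Q$ admits a transparent factorization. The appearance of $\sqrt{\kappa}$ in the coefficients suggests that $a_0 + a_1 t + a_2 t^2$ is tangent to $\sqrt{t}$ at $t = 1/\kappa$, i.e.\ that $Q$ has a double root at $u = 1/\sqrt{\kappa}$. Accordingly, I would claim
$$Q(u) = a_2\left(u - \tfrac{1}{\sqrt{\kappa}}\right)^2 (u - 1)\left(u + 1 + \tfrac{2}{\sqrt{\kappa}}\right).$$
This can be verified by direct expansion and comparison of coefficients: the leading coefficient is $a_2$ by inspection; the vanishing of the $u^3$-coefficient of $Q$ forces the four roots $\tfrac{1}{\sqrt{\kappa}}, \tfrac{1}{\sqrt{\kappa}}, 1, r$ to sum to zero, giving $r = -1 - \tfrac{2}{\sqrt{\kappa}}$; and matching the constant and linear terms then recovers exactly the stated $a_0$ and $a_1$. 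Equivalently, one checks the three interpolation conditions $Q(1/\sqrt{\kappa}) = 0$, $Q'(1/\sqrt{\kappa}) = 0$, and $Q(1) = 0$, which encode tangency of $a_0 + a_1 t + a_2 t^2$ to $\sqrt{t}$ at $t = 1/\kappa$ together with agreement at $t = 1$.

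With the factorization in hand the sign analysis is immediate. On $[0,1]$ the factor $\left(u - 1/\sqrt{\kappa}\right)^2$ is non-negative, $(u-1) \leq 0$, and $\left(u + 1 + 2/\sqrt{\kappa}\right) > 0$; moreover $a_2 < 0$, since
$$(\kappa^2 + \kappa)\sqrt{\kappa} - 2\kappa^2 = \kappa\sqrt{\kappa}\,(\sqrt{\kappa} - 1)^2 > 0 \qquad (\kappa > 1).$$
Hence $Q(u)$ is the product of $a_2 < 0$, a non-negative square, a non-positive factor, and a positive factor, so $Q(u) \geq 0$ for all $u \in [0,1]$, which gives the lemma.

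The only genuine work lies in verifying the factorization, that is, confirming that the somewhat opaque expressions for $a_0, a_1, a_2$ are precisely the coefficients produced by the double-root-at-$1/\sqrt{\kappa}$ and root-at-$1$ conditions. I expect this to be the main obstacle: it is a routine but error-prone algebraic computation. Once it is checked, the argument uses no inequality beyond $(\sqrt{\kappa} - 1)^2 > 0$, and the sign bookkeeping on $[0,1]$ finishes the proof.
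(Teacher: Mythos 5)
Your proof is correct, but it is worth noting that the paper does not actually prove this lemma at all: it is quoted verbatim from Tang--Wu \cite[Lemma~3.3]{TaWu2016}, and the paper's ``proof'' is simply that citation. Your argument is therefore a genuinely self-contained alternative, and I have checked that your key factorization is valid. Writing $s=\sqrt{\kappa_m}$, the identities $(\kappa_m-1)^2=(s-1)^2(s+1)^2$, $(\kappa_m-3)s+2=(s-1)^2(s+2)$, $(\kappa_m^2+3)s-4\kappa_m=s(s-1)^2(s^2+2s+3)$, and $(\kappa_m^2+\kappa_m)s-2\kappa_m^2=\kappa_m s(s-1)^2$ reduce the coefficients to the transparent forms
\[
a_0(m)=\frac{s+2}{2(s+1)^2},\qquad a_1(m)=\frac{s(s^2+2s+3)}{2(s+1)^2},\qquad a_2(m)=-\frac{s^3}{2(s+1)^2},
\]
from which one readily confirms that $a_2u^4+a_1u^2-u+a_0 = a_2\left(u-\tfrac{1}{s}\right)^2(u-1)\left(u+1+\tfrac{2}{s}\right)$: the $u$-coefficient of the right side is $a_2\cdot\frac{2(s+1)^2}{s^3}=-1$, the constant term is $-a_2\cdot\frac{s+2}{s^3}=a_0$, and the $u^2$-coefficient is $-a_2\cdot\frac{s^2+2s+3}{s^2}=a_1$, exactly as you claim; the sign analysis on $[0,1]$ then finishes the proof. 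What your route buys is an explanation of where the opaque coefficients come from --- the quadratic in $t$ is tangent to $\sqrt{t}$ at $t=1/\kappa_m$ and meets it at $t=1$ (equivalently $a_0+a_1+a_2=1$), which is precisely the extremal construction underlying Tang--Wu's lemma --- at the cost of a page of algebra that the paper avoids by outsourcing to the reference. The one presentational weakness is that you flag the coefficient verification as an unfinished ``routine but error-prone'' step rather than carrying it out; since it is the entire mathematical content of the lemma, a complete write-up should include it, e.g.\ via the simplified forms above.
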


\par
The next lemma gives a decomposition formula of Rankin-Selberg $L$-functions of two symmetric powers of the same cusp form.

\begin{lemma}\label{decomposition}
Let $f$ be a holomorphic Hecke eigenform for SL$(2,\bZ)$. Let $m,r \geq 0$ be two natural numbers. Then we have
$$L(s,\Sym^m(f) \times \Sym^{m+r}(f)) = \prod_{i=0}^m L(s,\Sym^{2i+r}(f)). $$
\end{lemma}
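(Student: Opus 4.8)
The plan is to reduce the claimed identity of $L$-functions to a purely local, combinatorial statement about Satake parameters, and then to verify that statement. Since both sides are Euler products that converge absolutely for $\Re(s)$ sufficiently large and both admit meromorphic continuation to $\bC$, it suffices to prove that the local Euler factors at every prime $p$ agree; the full identity then follows by uniqueness of Dirichlet series together with analytic continuation. So I would fix a prime $p$ and write $\alpha = \alpha_f(p)$, $\beta = \beta_f(p)$. The crucial input is that $f$ has level one and trivial character, so $\alpha\beta = 1$, i.e. $\beta = \alpha^{-1}$; this is exactly what collapses the two-variable Rankin--Selberg product into a one-variable problem.

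With this normalization the Satake parameters of $\Sym^n(f)$ at $p$ are $\{\alpha^i\beta^{n-i} : 0\le i\le n\} = \{\alpha^{n-2i} : 0\le i\le n\}$, i.e. the exponents $n, n-2, \dots, -n$. Setting $T = p^{-s}$, the local factor of $L(s,\Sym^{m}(f)\times\Sym^{m+r}(f))$ is the reciprocal of $\prod_{i=0}^{m}\prod_{j=0}^{m+r}\bigl(1 - \alpha^{(m-2i)+(m+r-2j)}T\bigr)$, while the local factor of $\prod_{l=0}^m L(s,\Sym^{2l+r}(f))$ is the reciprocal of $\prod_{l=0}^m\prod_{k=0}^{2l+r}\bigl(1 - \alpha^{2l+r-2k}T\bigr)$. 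Hence the lemma is equivalent to the equality of the multisets of exponents
$$\{(m-2i)+(m+r-2j) : 0\le i\le m,\ 0\le j\le m+r\} = \bigsqcup_{l=0}^m \{2l+r-2k : 0\le k\le 2l+r\},$$
which, encoding each multiset by the Laurent polynomial $U_n(x) := \sum_{i=0}^n x^{n-2i}$, is the single identity $U_m(x)\,U_{m+r}(x) = \sum_{l=0}^m U_{2l+r}(x)$.

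I would prove this last identity by the standard telescoping that underlies the Clebsch--Gordan decomposition for $\mathrm{SL}_2$. Multiplying through by $(x-x^{-1})$ and using $(x-x^{-1})U_n(x) = x^{n+1} - x^{-(n+1)}$ collapses one factor, and expanding $(x^{m+1}-x^{-(m+1)})\,U_{m+r}(x)$ yields two arithmetic-progression sums whose overlapping middle terms (if any) cancel in pairs, leaving precisely $\sum_{l=0}^m\bigl(x^{2l+r+1} - x^{-(2l+r+1)}\bigr) = (x-x^{-1})\sum_{l=0}^m U_{2l+r}(x)$; cancelling the nonzero factor $x-x^{-1}$ gives the claim. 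The only genuinely delicate point is the bookkeeping in this cancellation — checking that the middle ranges of exponents coincide with multiplicity and agree in parity, so that exactly the outer $m+1$ positive and $m+1$ negative terms survive — but once this is set up the computation is mechanical. Alternatively, one may simply invoke that the finite-dimensional $\mathrm{SL}_2(\bC)$-representation $\Sym^m\otimes\Sym^{m+r}$ decomposes as $\bigoplus_{l=0}^m \Sym^{2l+r}$ and read off the identity from characters, since $\mathrm{diag}(\alpha,\alpha^{-1})$ acts on $\Sym^n$ with eigenvalues $\alpha^n,\alpha^{n-2},\dots,\alpha^{-n}$.
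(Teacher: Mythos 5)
Your proposal is correct and follows essentially the same route as the paper: the paper's proof is exactly the reduction you perform, namely checking the identity locally by comparing Satake parameters at each prime. The only difference is in how the local character identity is verified --- you telescope against the factor $(x - x^{-1})$ (equivalently, invoke the Clebsch--Gordan decomposition for $\mathrm{SL}_2(\bC)$), whereas the paper suggests fixing $r$ and inducting on $m$; both work, and your version is written out in full where the paper only sketches.
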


\par
\begin{proof}This lemma can be directly checked by comparing Satake parameters of both sides at each prime $p$. A possible way is to fix $r$ and perform an induction on $m$.
\end{proof}

We need the following orthogonality relations for $L$-functions. 

\begin{lemma}\label{orthognality relations for L functions}
Let $\pi_1,\pi_2$ be two self-dual, automorphic cuspidal representations of GL$(d_1,\bA_\bQ)$ and GL$(d_2,\bA_\bQ)$ respectively. Suppose that one of the following conditions are satisfied:
\begin{enumerate}[(a)]
\item 
The Ramanujan conjecture holds for at least one of $\pi_1,\pi_2.$
\item 
We have $d_1,d_2\leq 4.$
\end{enumerate}

Then 
\[\sum_{p\leq x}\frac{\lambda_{\pi_1}(p)\lambda_{\pi_2}(p)}{p}=
\left\{\begin{array}{cc}
\log\log x+O(1)&\mbox{if $\pi_1=\pi_2$}\\
O(1)&\mbox{otherwise.}
\end{array}\right.
\]
\end{lemma}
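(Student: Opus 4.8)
The plan is to read off the asymptotics from the behaviour of the Rankin--Selberg $L$-function $L(s,\pi_1\times\pi_2)$ near the point $s=1$ and on the line $\Re s=1$. Because $\pi_1$ and $\pi_2$ are self-dual, $\widetilde{\pi_2}\cong\pi_2$, so $\pi_1\cong\widetilde{\pi_2}$ exactly when $\pi_1\cong\pi_2$; by the general theory of Rankin--Selberg $L$-functions this means $L(s,\pi_1\times\pi_2)$ is holomorphic on $\Re s\ge 1$ apart from a simple pole at $s=1$ that is present if and only if $\pi_1\cong\pi_2$, and is non-vanishing on the entire line $\Re s=1$. Self-duality also makes each $\lambda_{\pi_i}(n)$ real, so the sums to be estimated are real-valued.

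First I would expand the logarithm of the Euler product. Setting $b_{\pi_i}(p^k)=\sum_{j=1}^{d_i}\alpha_{\pi_i,j}(p)^k$, this gives
\[
\log L(s,\pi_1\times\pi_2)=\sum_p\frac{\lambda_{\pi_1}(p)\lambda_{\pi_2}(p)}{p^s}+R(s),\qquad R(s)=\sum_{k\ge 2}\frac{1}{k}\sum_p\frac{b_{\pi_1}(p^k)\,b_{\pi_2}(p^k)}{p^{ks}},
\]
since the prime ($k=1$) coefficient equals $b_{\pi_1}(p)b_{\pi_2}(p)=\lambda_{\pi_1}(p)\lambda_{\pi_2}(p)=\lambda_{\pi_1\times\pi_2}(p)$ by the relations recorded in Section 2. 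The crux of the proof is to show that the prime-power remainder $R(s)$ stays bounded as $s\to 1^+$, so that the singularity of $\log L$ at $s=1$ is carried entirely by the prime sum.

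Bounding $R(s)$ is the main obstacle, and this is precisely where hypotheses (a) and (b) are used. Under (a), assuming the Ramanujan conjecture for, say, $\pi_1$ gives $|b_{\pi_1}(p^k)|\le d_1$, while the Luo--Rudnick--Sarnak bound $|\alpha_{\pi_2,j}(p)|\le p^{\theta}$ with $\theta=\tfrac12-\tfrac{1}{d_2^2+1}<\tfrac12$ gives $|b_{\pi_2}(p^k)|\le d_2\,p^{k\theta}$; hence $|b_{\pi_1}(p^k)b_{\pi_2}(p^k)|\le d_1 d_2\,p^{k\theta}$ and every $k\ge 2$ term of $R(s)$ converges absolutely on $\Re s>\tfrac12+\theta$, a half-plane containing $\Re s\ge 1$. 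Under (b), with $d_1,d_2\le 4$, the pointwise bounds no longer suffice, and I would instead invoke the Rudnick--Sarnak Hypothesis H, which is known unconditionally in these degrees and asserts that $\sum_{k\ge 2}\sum_p |b_{\pi_i}(p^k)|^2(\log p)^2 p^{-k}<\infty$ for $i=1,2$. Applying Cauchy--Schwarz to $\sum_p b_{\pi_1}(p^k)b_{\pi_2}(p^k)p^{-k}$ then yields the absolute convergence of $R(s)$ on the closed half-plane $\Re s\ge 1$, so in particular $R(s)=O(1)$ as $s\to 1^+$.

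With $R(s)=O(1)$, the dichotomy for the pole gives
\[
\sum_p\frac{\lambda_{\pi_1}(p)\lambda_{\pi_2}(p)}{p^s}=\delta_{\pi_1,\pi_2}\,\log\frac{1}{s-1}+O(1)\qquad(s\to 1^+),
\]
where $\delta_{\pi_1,\pi_2}=1$ if $\pi_1\cong\pi_2$ and $0$ otherwise. To turn this into the partial-sum statement I would pass to $-\tfrac{L'}{L}(s,\pi_1\times\pi_2)$ and use the standard zero-free region for $L(s,\pi_1\times\pi_2)$ (equivalently, a prime-number-theorem-type estimate for $\sum_{n\le x}\Lambda_{\pi_1\times\pi_2}(n)$) to obtain $\sum_{p\le x}\lambda_{\pi_1}(p)\lambda_{\pi_2}(p)(\log p)\,p^{-1}=\delta_{\pi_1,\pi_2}\log x+O(1)$, the non-vanishing on $\Re s=1$ ensuring there is no spurious main term; removing the weight $\log p$ by partial summation then produces $\delta_{\pi_1,\pi_2}\log\log x+O(1)$, as required. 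The two delicate points are the unconditional input in degrees $\le 4$ (Hypothesis H) and pinning the error in the $\log$-weighted prime sum down to $O(1)$ rather than $o(\log x)$, which is where the zero-free region, as opposed to mere non-vanishing on the line, is needed.
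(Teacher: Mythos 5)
Your overall architecture --- expand $\log L(s,\pi_1\times\pi_2)$ over the Euler product, isolate the prime sum, kill the $k\geq 2$ prime-power terms (via Ramanujan plus Luo--Rudnick--Sarnak in case (a), via Hypothesis H, known in degree $\leq 4$, plus Cauchy--Schwarz in case (b)), and then convert the pole/non-vanishing dichotomy at $s=1$ into a partial-sum statement --- is sound, and it is essentially the framework of the references behind this lemma. Note that the paper itself offers no argument at all here: its ``proof'' consists of the citations \cite{RudnickSarnak1996}, \cite{LiuWangYe2005}, \cite{WangWeiYanYi2023}. Your first three steps are correct and would compile into a genuine proof of the Dirichlet-series version
\[
\sum_p\frac{\lambda_{\pi_1}(p)\lambda_{\pi_2}(p)}{p^s}=\delta_{\pi_1,\pi_2}\log\frac{1}{s-1}+O(1)\qquad(s\to 1^+).
\]

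The genuine gap is in the last step, exactly at the point you call ``delicate'' and then wave through: there is no \emph{standard} zero-free region for $L(s,\pi_1\times\pi_2)$. For a Rankin--Selberg $L$-function of two distinct cuspidal representations, the de la Vall\'ee Poussin machinery does not apply, since it would require analytic control of $(\pi_1\times\pi_2)\times(\widetilde{\pi}_1\times\widetilde{\pi}_2)$, i.e.\ automorphy of the Rankin--Selberg product, which is not known; the absence of such a region is a well-known obstruction in this subject, not a routine citation. What rescues the lemma is precisely its self-duality hypothesis: when at least one of $\pi_1,\pi_2$ is self-dual, a zero-free region and the corresponding prime number theorem $\sum_{n\leq x}\Lambda_{\pi_1\times\pi_2}(n)=\delta_{\pi_1,\pi_2}\,x+O\bigl(x\exp(-c\sqrt{\log x})\bigr)$ were established by Liu--Ye and Liu--Wang--Ye, and that input --- together with the degree-$\leq 4$ verification of Hypothesis H --- is the actual content of \cite[Corollary~1.5]{LiuWangYe2005} and, for case (a), of \cite[Proposition~3.1]{WangWeiYanYi2023}. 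The tell-tale sign in your write-up is that self-duality is used only to make the coefficients real and to convert $\pi_1\cong\widetilde{\pi}_2$ into $\pi_1\cong\pi_2$; a proof in which the lemma's key hypothesis never enters at the hardest step cannot be complete. You are right that mere non-vanishing on $\Re(s)=1$ (Wiener--Ikehara) would only give an error $o(\log\log x)$ rather than $O(1)$; but repairing the argument requires invoking the self-duality-based zero-free regions of Liu--(Wang--)Ye (or later sieve-theoretic work of Humphries and Brumley), not an off-the-shelf ``standard'' one.
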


\begin{proof}
For the proof, one can refer to \cite{RudnickSarnak1996}, \cite{LiuWangYe2005} and \cite{WangWeiYanYi2023}. For (a), one can use \cite[Proposition~3.1]{WangWeiYanYi2023}. For (b), one can use \cite[Corollary~1.5]{LiuWangYe2005}.
\end{proof}

\par
The next lemma tells us when two symmetric power representations of distinct holomorphic forms are isomorphic. 

\begin{lemma}\label{multiplicity one}
Let $f,g$ be two distinct holomorphic Hecke eigenforms for SL$(2,\bZ)$. Let $m_1,m_2 \geq 0$ be two natural numbers. Then $\Sym^{m_1}(f) \cong \Sym^{m_2}(g)$ if and only if $m_1 = m_2 = 0$.    
\end{lemma}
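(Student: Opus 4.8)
The ``if'' direction is immediate: when $m_1=m_2=0$, both $\Sym^0(f)$ and $\Sym^0(g)$ are the trivial representation of GL$(1,\bA_\bQ)$ (their $L$-functions both equal $\zeta(s)$), so they are isomorphic. The content is the ``only if'' direction, which I would prove by contradiction, assuming $\Sym^{m_1}(f)\cong\Sym^{m_2}(g)$ with $(m_1,m_2)\neq(0,0)$. The first step is to match degrees: by Newton--Thorne, $\Sym^{m_i}$ of a form is cuspidal on GL$(m_i+1,\bA_\bQ)$, and isomorphic cuspidal representations live on the same group (equivalently, their $L$-functions have equal degree), so $m_1=m_2=:m$; since the pair is nonzero, $m\geq 1$. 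The goal is then to contradict $f\neq g$.

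Next I would convert the isomorphism into an identity of Hecke data. Since $f,g$ have level one, $\Sym^m(f)$ and $\Sym^m(g)$ are unramified everywhere, so strong multiplicity one forces the Satake parameter multisets to agree at \emph{every} prime:
\[\{\alpha_f(p)^j\beta_f(p)^{m-j}:0\le j\le m\}=\{\alpha_g(p)^j\beta_g(p)^{m-j}:0\le j\le m\}.\]
Writing $\lambda_f(p)=2\cos\theta_f(p)$ and $\lambda_g(p)=2\cos\theta_g(p)$ with $\theta_f(p),\theta_g(p)\in[0,\pi]$ --- legitimate because Deligne's bound gives $|\alpha_f(p)|=|\beta_f(p)|=1$ --- these multisets are $\{e^{ik\theta_f(p)}\}$ and $\{e^{ik\theta_g(p)}\}$ with $k$ ranging over $-m,-m+2,\dots,m$. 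A direct computation with these symmetric arithmetic progressions on the unit circle shows that, unless $\theta_f(p)$ is a rational multiple of $\pi$, their equality forces $\theta_f(p)\equiv\pm\theta_g(p)\pmod{\pi}$, i.e. $\lambda_f(p)^2=\lambda_g(p)^2$. The exceptional set of primes $p$ with $\theta_f(p)\in\pi\bQ$ has natural density zero by Sato--Tate equidistribution (now a theorem for level-one forms), so $\lambda_{\Sym^2(f)}(p)=\lambda_f(p)^2-1=\lambda_g(p)^2-1=\lambda_{\Sym^2(g)}(p)$ on a set of primes of density one, whence $\Sym^2(f)\cong\Sym^2(g)$ by a strong form of multiplicity one. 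This reduces everything to the symmetric-square (equivalently adjoint) level.

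Finally I would invoke the rigidity of the symmetric-square lift: $\Sym^2(f)\cong\Sym^2(g)$ implies $g\cong f\otimes\chi$ for some Dirichlet character $\chi$ (the description of the fibers of the adjoint/symmetric-square transfer, due to Ramakrishnan). But $f,g$ have level one, while $f\otimes\chi$ has conductor $\operatorname{cond}(\chi)^2$, which equals $1$ only for trivial $\chi$; hence $f\cong g$, contradicting $f\neq g$. I expect the main obstacle to be the sign ambiguity in the middle step for even $m$: matching the $\Sym^m$ Satake multisets yields only $\lambda_f(p)^2=\lambda_g(p)^2$, not $\lambda_f(p)=\lambda_g(p)$, and the orthogonality relations of Lemma~\ref{orthognality relations for L functions} cannot by themselves resolve this sign --- indeed they are insensitive to twisting, so the argument genuinely requires the arithmetic input that a level-one form admits no nontrivial twist (equivalently, is non-CM with no companion of level one). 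Packaging the reduction to $\Sym^2$ cleanly and citing the correct fiber theorem for the symmetric-square transfer is where the real care is needed.
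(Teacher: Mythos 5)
Your proposal is a genuine reconstruction: the paper offers no argument at all for this lemma, simply citing \cite[Corollary~5.2]{CoMi2004}, and most of your skeleton is sound --- degree matching forces $m_1=m_2=m\geq 1$; isomorphism of everywhere-unramified cuspidal representations forces equality of the Satake multisets at every prime; and the endgame (Ramakrishnan's description of the fibers of the adjoint/symmetric-square transfer, plus the fact that $f\otimes\chi$ has conductor $\mathrm{cond}(\chi)^2$ when $f$ has level one) is correct. The genuine gap is in the middle step, where you discard the primes with $\theta_f(p)\in\pi\bQ$. Sato--Tate equidistribution does \emph{not} imply that $\{p:\theta_f(p)\in\pi\bQ\}$ has density zero: equidistribution controls only sets of angles whose boundary has measure zero, whereas $\pi\bQ\cap[0,\pi]$ is countable and dense, and a countable union of density-zero sets need not have density zero (every prime lies in some singleton). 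No known theorem gives density zero for this set. Nor can these primes be ignored, because your multiset claim genuinely fails on them: for $m=6$, the angles $\theta=2\pi/7$ and $\phi=4\pi/7$ produce the \emph{identical} $\Sym^6$ Satake multiset (all seventh roots of unity, each once), yet $\cos^2\theta\neq\cos^2\phi$.

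The repair comes from pushing your ``direct computation'' further, so that the exceptional set becomes finite rather than countable. The multiset equality at $p$ is realized by a bijection $\sigma$ of $S=\{-m,-m+2,\dots,m\}$ with $k\theta_g(p)\equiv\sigma(k)\theta_f(p)\pmod{2\pi}$ for all $k\in S$; cross-multiplying two such congruences gives $\left(k'\sigma(k)-k\sigma(k')\right)\theta_f(p)\in 2\pi\bZ$ for all $k,k'\in S$. Either all of these integers vanish, in which case one checks $\sigma=\pm\mathrm{id}$ and hence $\theta_f(p)\equiv\pm\theta_g(p)\pmod{\pi}$, i.e.\ $\lambda_f(p)^2=\lambda_g(p)^2$; or else $\theta_f(p)$ is a rational multiple of $2\pi$ with denominator at most $2m^2$. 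Thus the exceptional angles lie in a \emph{finite} set depending only on $m$, and for a finite set Sato--Tate (each atom has measure zero) does give density zero. One further point deserves justification: your appeal to ``a strong form of multiplicity one'' for density-one agreement of the GL$(3)$ representations $\Sym^2(f)$ and $\Sym^2(g)$. This is true here, but it needs Deligne's bound: the density-zero exceptional set contributes $o(\log\log x)$ to $\sum_{p\le x}\lambda_{\Sym^2(f)}(p)\lambda_{\Sym^2(g)}(p)/p$, so by Lemma~\ref{orthognality relations for L functions} this sum diverges like $\log\log x$, which is incompatible with $\Sym^2(f)\not\cong\Sym^2(g)$; alternatively cite Rajan's refinement of strong multiplicity one. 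With these two repairs your argument goes through and is, in spirit, the argument behind the cited result of Cogdell--Michel.
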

\begin{proof}
For a proof, see \cite[Corollary 5.2]{CoMi2004}.
\end{proof}

The last lemma, due to K. Venkatasubbareddy, A. Kaur and A. Sankaranarayanan, \cite[Lemma~3.6]{VeKaSa2022}, is used to prove an analogy of Lemma \ref{Shiu}. It enables us to deal with Maass forms, where Ramanujan conjecture is not available. 

\begin{lemma}\label{Maass form Shiu}
Let $g$ be a non-negative multiplicative function. Suppose that exist non-negative functions $h_1(x), h_2(x)$, with $h_1(x)$ increasing, such that
$$\sum_{p \leq x}g(p)\log p \ll xh_1(x)$$
and
$$\sum_p \sum_{\alpha \geq 2} \frac{g(a^\alpha)}{p^\alpha}\log p^\alpha \ll h_2(x).$$
Then we have
\[\sum_{n\leq x}g(n)\ll(h_1(x)+h_2(x)+1)\frac{x}{\log x}\sum_{n\leq x}\frac{g(n)}{n}.\]
\end{lemma}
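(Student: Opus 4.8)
The plan is to gain the extra factor of $\log x$ by means of the elementary identity
$$S(x)\log x=\sum_{n\le x}g(n)\log n+\sum_{n\le x}g(n)\log\frac{x}{n}=:T(x)+U(x),$$
where I abbreviate $S(x):=\sum_{n\le x}g(n)$ and $L(x):=\sum_{n\le x}g(n)/n$, and then to bound $T(x)$ and $U(x)$ separately, each by a constant times $xL(x)$ carrying the appropriate weight. The term $U(x)$ is the easy one: I would use the pointwise inequality $\log u\le u/e$ (valid for all $u>0$, with equality at $u=e$) at $u=x/n$, which gives $U(x)\le\frac1e\sum_{n\le x}g(n)\frac xn=\frac{x}{e}L(x)$. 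This contribution is what eventually produces the ``$+1$'' in the final factor.

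The substance lies in $T(x)$. Writing $\log n=\sum_{p^b\Vert n}b\log p$ and factoring each $n=p^bm$ with $p\nmid m$ (using the multiplicativity and non‑negativity of $g$), I would rewrite
$$T(x)=\sum_p\log p\sum_{b\ge1}b\,g(p^b)\sum_{\substack{m\le x/p^b\\ p\nmid m}}g(m)$$
and split off the diagonal $b=1$ from the higher prime powers $b\ge2$. For $b\ge2$ the crude estimate $\sum_{m\le x/p^b}g(m)\le\frac{x}{p^b}L(x)$ is harmless: recognizing $b\log p=\log p^b$, the total $b\ge2$ contribution is
$$\ll xL(x)\sum_p\sum_{b\ge2}\frac{g(p^b)\log p^b}{p^b}\ll xL(x)\,h_2(x),$$
which is exactly the second hypothesis.

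The main obstacle is the term $b=1$, namely $A:=\sum_p g(p)\log p\sum_{m\le x/p,\,p\nmid m}g(m)$. The naive route—bounding the inner sum by $\frac{x}{p}L(x)$—reduces matters to $\sum_{p\le x}\frac{g(p)\log p}{p}$, and partial summation from the first hypothesis (using that $h_1$ is increasing) only gives $\ll h_1(x)\log x$; the spurious $\log x$ then destroys the claimed bound. The remedy I would use is to interchange the order of summation, summing over $m$ first and applying the first hypothesis at the shifted scale $x/m$:
$$A\le\sum_{m<x}g(m)\sum_{p\le x/m}g(p)\log p\ll\sum_{m<x}g(m)\,\frac{x}{m}\,h_1\!\Big(\frac{x}{m}\Big)\le x\,h_1(x)\sum_{m<x}\frac{g(m)}{m}\le x\,h_1(x)\,L(x),$$
where monotonicity of $h_1$ supplies $h_1(x/m)\le h_1(x)$. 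Collecting the three pieces yields $S(x)\log x\ll\big(h_1(x)+h_2(x)+1\big)\,xL(x)$, which is the assertion. I expect this reordering—applying the $\log$‑weighted prime bound at the point $x/m$ rather than passing through $\sum_p g(p)\log p/p$—to be the one genuinely delicate step, everything else being routine bookkeeping that relies only on $g\ge0$ and the monotonicity of $L$ and $h_1$.
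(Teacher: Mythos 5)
Your proof is correct and complete. Note that the paper itself does not prove this lemma at all---it quotes it verbatim from Venkatasubbareddy--Kaur--Sankaranarayanan \cite[Lemma~3.6]{VeKaSa2022}---so there is no internal argument to compare against; what you have written is the standard proof of such Shiu-type estimates and is, in substance, the argument of the cited source. The decomposition $S(x)\log x=\sum_{n\le x}g(n)\log n+\sum_{n\le x}g(n)\log(x/n)$, the bound $\log(x/n)\ll x/n$, and the expansion of $\log n$ over exact prime powers via multiplicativity and non-negativity of $g$ are all sound, and your estimates of the $\alpha=1$ and $\alpha\ge 2$ contributions invoke exactly the two hypotheses in the intended way. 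You also correctly identify the one genuinely delicate step: the $\alpha=1$ term must be handled by summing over $m$ first and applying $\sum_{p\le y}g(p)\log p\ll y\,h_1(y)$ at $y=x/m$ together with the monotonicity of $h_1$, since the naive route through $\sum_{p\le x}g(p)\log p/p$ would lose a factor of $\log x$. (One cosmetic point: the paper's statement contains a typo, $g(a^{\alpha})$ where $g(p^{\alpha})$ is meant, which you silently and correctly repaired.)
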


\section{Proof of Theorem 1.1}
We write $\pi = \Sym^{m_1}(f) \times \Sym^{m_2}(g)$.
To start with, we apply Lemma \ref{Shiu} to the non-negative multiplicative function $|\mu_{\pi}(n)|$, with suitable choices of $a,k,\alpha,\beta$, to get
$$\sum_{n=1}^N|\mu_{\pi}(n)| \ll \frac{N}{\log N}\exp\left(\sum_{p \leq N}\frac{|\mu_\pi(p)|}{p}\right).$$
The conditions in Lemma \ref{Shiu} are satisfied because the Ramanujan conjecture of holomorphic cusp forms is known to be true, by a remarkable result of P. Deligne, and $\mu_{\pi}(n)$ is supported on $(d_\pi+1)^{th}$ power-free integers, where $d_\pi = (m_1+1)(m_2+1)$.
Observe that 
$$\mu_\pi(p) = -\sum_{j=1}^{d_\pi} \alpha_{\pi,j}(p) = -\lambda_\pi(p).$$
So one has
$$\sum_{p \leq N}\frac{|\mu_\pi(p)|}{p} = \sum_{p \leq N}\frac{|\lambda_\pi(p)|}{p}.$$
We then apply Lemma \ref{inequality}, with $t = \left(\frac{|\lambda_\pi(p)|}{d_\pi}\right)^2$ and $m=d_\pi-1$, to get
$$|\lambda_\pi(p)| \leq a_0(d_\pi-1)d_\pi + a_1(d_\pi-1)\frac{\lambda_\pi(p)^2}{d_\pi} + a_2(d_\pi-1)\frac{\lambda_\pi(p)^4}{d_\pi^3}.$$
Thus we have
$$\sum_{p \leq N}\frac{|\lambda_\pi(p)|}{p} \leq a_0(d_\pi-1)d_\pi\sum_{p \leq N}\frac{1}{p} + \frac{a_1(d_\pi-1)}{d_\pi}\sum_{p \leq N}\frac{\lambda_\pi(p)^2}{p} + \frac{a_2(d_\pi-1)}{d_\pi^3}\sum_{p \leq N}\frac{\lambda_\pi(p)^4}{p}.$$
For the first term we use the asymptotic formula 
$$\sum_{p \leq N}\frac{1}{p} = \log \log N + O(1).$$
For the second term we note that
$$\lambda_\pi(p)^2 = \lambda_{\pi \times \pi}(p).$$
We then use Lemma \ref{decomposition} to decompose $\pi \times \pi$:
\begin{align*}
    \pi \times \pi &= (\Sym^{m_1}(f) \times \Sym^{m_2}(g)) \times (\Sym^{m_1}(f) \times \Sym^{m_2}(g)) \\
    &= (\Sym^{m_1}(f) \times \Sym^{m_1}(f)) \times (\Sym^{m_2}(g) \times \Sym^{m_2}(g)) \\
    &= (\boxplus_{i=0}^{m_1}\Sym^{2i}(f)) \times (\boxplus_{j=0}^{m_2}\Sym^{2j}(g)) \\
    &= \boxplus_{i=0}^{m_1}\boxplus_{j=0}^{m_2} \Sym^{2i}(f) \times \Sym^{2j}(g)
\end{align*}
Thus the sum in the second term is
\begin{align*}
    \sum_{p \leq N} \frac{\lambda_\pi(p)^2}{p} &= \sum_{p \leq N}\frac{\lambda_{\pi \times \pi}(p)}{p} \\
    &= \sum_{i=0}^{m_1}\sum_{j=0}^{m_2}\sum_{p \leq N}\frac{\lambda_{\Sym^{2i}(f) \times \Sym^{2j}(g)}(p)}{p} \\
    &= \sum_{i=0}^{m_1}\sum_{j=0}^{m_2}\delta_{\Sym^{2i}(f) \cong \Sym^{2j}(g)}\log \log N + O(1) \\
    &= \log \log N + O(1)
\end{align*}
in view of Lemma \ref{orthognality relations for L functions} and Lemma \ref{multiplicity one}.
For the third term, we use the relation
$$\lambda_\pi(p)^4 = \lambda_{\pi \time \pi \times \pi \times \pi \times \pi}(p)$$
and the decomposition
\begin{align*}
    \pi \time \pi \times \pi \times \pi \times \pi &= (\boxplus_{i_1=0}^{m_1}\boxplus_{j_1=0}^{m_2} \Sym^{2i_1}(f) \times \Sym^{2j_1}(g)) \times (\boxplus_{i_2=0}^{m_1}\boxplus_{j_2=0}^{m_2} \Sym^{2i_2}(f) \times \Sym^{2j_2}(g)) \\
    &= \boxplus_{i_1=0}^{m_1}\boxplus_{j_1=0}^{m_2}\boxplus_{i_2=0}^{m_1}\boxplus_{j_2=0}^{m_2}(\Sym^{2i_1}(f) \times \Sym^{2i_2}(f)) \times (\Sym^{2j_1}(g) \times \Sym^{2j_2}(g)) \\    
    &= \boxplus_{i_1=0}^{m_1}\boxplus_{j_1=0}^{m_2}\boxplus_{i_2=0}^{m_1}\boxplus_{j_2=0}^{m_2}\boxplus_{k=0}^{\min\{2i_1,2i_2\}}\boxplus_{l=0}^{\min\{2j_1,2j_2\}} \Sym^{2k+2|i_1-i_2|}(f) \times \Sym^{2l+2|j_1-j_2|}(g)
\end{align*}
This gives
\begin{align*}
    \sum_{p \leq N}\frac{\lambda_\pi(p)^4}{p} &= \sum_{p \leq N}\frac{\lambda_{\pi \time \pi \times \pi \times \pi \times \pi}(p)}{p} \\
    &= \sum_{i_1=0}^{m_1}\sum_{i_2=0}^{m_1}\sum_{j_1=0}^{m_2}\sum_{j_2=0}^{m_2}\sum_{k=0}^{\min\{2i_1,2i_2\}}\sum_{l=0}^{\min\{2j_1,2j_2\}}\sum_{p \leq N}\frac{\lambda_{\Sym^{2k+2|i_1-i_2|}(f) \times \Sym^{2l+2|j_1-j_2|}(g)}(p)}{p} \\
    &= \sum_{i_1=0}^{m_1}\sum_{i_2=0}^{m_1}\sum_{j_1=0}^{m_2}\sum_{j_2=0}^{m_2}\sum_{k=0}^{\min\{2i_1,2i_2\}}\sum_{l=0}^{\min\{2j_1,2j_2\}} \delta_{\Sym^{2k+2|i_1-i_2|}(f) \cong \Sym^{2l+2|j_1-j_2|}(g)} \log \log N + O(1) \\
    &= \sum_{i_1=0}^{m_1}\sum_{i_2=0}^{m_1}\sum_{j_1=0}^{m_2}\sum_{j_2=0}^{m_2}\sum_{k=0}^{\min\{2i_1,2i_2\}}\sum_{l=0}^{\min\{2j_1,2j_2\}} \delta_{k=l=0,i_1=i_2,j_1=j_2} \log \log N + O(1) \\
    &= (m_1+1)(m_2+1)\log \log N+ O(1) \\
    &= d_\pi\log \log N + O(1).
\end{align*}
Combine these, we have
\begin{align*}
\sum_{p \leq N}\frac{|\lambda_\pi(p)|}{p} &\leq (a_0(d_\pi-1)d_\pi + \frac{a_1(d_\pi-1)}{d_\pi} + \frac{a_2(d_\pi-1)}{d_\pi^2})\log \log N + O(1) \\
&= (1-\eta_{m_1,m_2})\log \log N + O(1)
\end{align*}
where
$$\eta_{m_1,m_2} = \frac{(d_\pi-1)(d_\pi+1)}{(d_\pi-1)^2+3(d_\pi-1)+1}\left(\frac{d_\pi+2}{d_\pi+1} - \sqrt{\frac{d_\pi+1}{d_\pi}}\right).$$
For a detailed calculation, see \cite[Lemma~3.3]{TaWu2016}.
Finally, we have
\begin{align*}
    \sum_{n=1}^N |\mu_\pi(n)| &\ll \frac{N}{\log N} \exp((1-\eta_{m_1,m_2})\log \log N) \\ 
    &= \frac{N}{\log^{\eta_{m_1,m_2}}N}
\end{align*}
where $\eta_{m_1,m_2} > 0$ if and only if $d_\pi = (m_1+1)(m_2+1) > 1$, that is, $m_1$ and $m_2$ not both zero. The proof of Theorem 1.1 is now complete.

\section{Proof of Theorem 1.5}
Before specializing to Maass forms, we are going to establish an inequality analogous to Lemma \ref{Shiu} that works for a boarder class of $L$-functions.

\begin{proposition}\label{non-ramanujan lemma}
Let $\pi$ be a self-dual, everywhere unramified cuspidal automorphic representation of GL$(d,\bA_\bQ)$. Then we have
\[\sum_{n\leq x}|\mu_{\pi}(n)|\ll\frac{x}{\log x}\exp\left(\sum_{p\leq x}\frac{|\mu_{\pi}(p)|}{p}\right).\]
\end{proposition}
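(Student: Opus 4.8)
The plan is to prove Proposition~\ref{non-ramanujan lemma} by invoking Lemma~\ref{Maass form Shiu} applied to the non-negative multiplicative function $g(n) = |\mu_\pi(n)|$. Unlike the holomorphic case treated in Theorem~\ref{absolute rankin selberg}, I cannot use Lemma~\ref{Shiu} directly, because that lemma requires the hypothesis $f(n) \ll_\ve n^\ve$, which in turn would follow from the Ramanujan conjecture for $\pi$ --- and for general $\pi$ (in particular for Maass forms) this is not available. Lemma~\ref{Maass form Shiu} is designed precisely to circumvent this: it replaces the pointwise Ramanujan-type bound by two averaged hypotheses on $g(p)$ and on the prime-power contributions, so the bulk of the work is to verify these two hypotheses for $g(n)=|\mu_\pi(n)|$ with explicit admissible choices of $h_1(x)$ and $h_2(x)$.

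First I would record the structure of $\mu_\pi(n)$. Since $L(s,\pi)^{-1} = \prod_p \prod_{j=1}^d\left(1 - \frac{\alpha_{\pi,j}(p)}{p^s}\right)$, the function $\mu_\pi$ is multiplicative and supported on $(d+1)^{\mathrm{th}}$-power-free integers, with $\mu_\pi(p) = -\sum_{j=1}^d \alpha_{\pi,j}(p) = -\lambda_\pi(p)$, so $|\mu_\pi(p)| = |\lambda_\pi(p)|$. For the first hypothesis of Lemma~\ref{Maass form Shiu} I would estimate $\sum_{p\leq x} |\mu_\pi(p)|\log p = \sum_{p\leq x}|\lambda_\pi(p)|\log p$. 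Using Cauchy--Schwarz (or the elementary bound $|\lambda_\pi(p)| \le 1 + \lambda_\pi(p)^2$) together with the Rankin--Selberg bound $\sum_{p\le x}\lambda_\pi(p)^2\log p \ll x$ --- which comes from the fact that $L(s,\pi\times\pi)$ has a simple pole at $s=1$ and standard Tauberian/Mertens-type estimates, valid since $\pi\times\pi$ is an isobaric sum whose constituents have known analytic properties --- this sum is $\ll x$, so I may take $h_1(x)$ to be a constant (or any increasing function bounded below by a constant). For the second hypothesis, the prime-power terms with $\alpha\geq 2$: because $\mu_\pi$ is supported on $(d+1)$-power-free integers, only finitely many exponents $2\le \alpha\le d$ contribute, and each $|\mu_\pi(p^\alpha)|$ is bounded by a fixed polynomial in the $|\alpha_{\pi,j}(p)|$; since $\pi$ is everywhere unramified, the trivial bound $|\alpha_{\pi,j}(p)| < p^{1/2}$ (or even the weaker Jacquet--Shalika bound) gives $|\mu_\pi(p^\alpha)| \ll p^{\alpha/2}$ uniformly, so $\sum_p\sum_{\alpha\ge 2} \frac{|\mu_\pi(p^\alpha)|}{p^\alpha}\log p^\alpha \ll \sum_p \frac{\log p}{p^{1-1/2}} \cdot(\text{bounded count})$; I would refine this so the tail converges, yielding a bounded $h_2(x)$ (the convergence follows because the worst exponent is $\alpha$ near $d$ but with $p^{-\alpha}$ decay winning once one uses the true power-free support and a pointwise bound slightly better than $p^{\alpha/2}$, e.g.\ $|\mu_\pi(p^\alpha)| \ll_\ve p^{\alpha\theta + \ve}$ with $\theta < 1/2$). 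The main obstacle is exactly this second estimate: one must make sure the crude Satake-parameter bound available without Ramanujan is still strong enough to force convergence of the prime-power sum, and I expect to lean on the Luo--Rudnick--Sarnak / Kim--Sarnak bound $\theta \le 7/64$ for $\mathrm{GL}(2)$ Maass forms (and the resulting bounds for the relevant $\alpha_{\pi,j}(p)$) to guarantee a genuinely bounded $h_2$.

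With both hypotheses verified and $h_1(x)+h_2(x)+1 \ll 1$, Lemma~\ref{Maass form Shiu} yields
\[
\sum_{n\le x}|\mu_\pi(n)| \ll \frac{x}{\log x}\sum_{n\le x}\frac{|\mu_\pi(n)|}{n}.
\]
The final step is to convert the Dirichlet-sum factor into the exponential of a sum over primes. Since $|\mu_\pi|$ is multiplicative and supported on power-free integers, I would bound $\sum_{n\le x}\frac{|\mu_\pi(n)|}{n}$ by the Euler product $\prod_{p\le x}\left(1 + \frac{|\mu_\pi(p)|}{p} + \cdots\right)$, and then use $1+u \le e^{u}$ together with the earlier argument that the prime-power terms contribute $O(1)$, giving
\[
\sum_{n\le x}\frac{|\mu_\pi(n)|}{n} \ll \exp\left(\sum_{p\le x}\frac{|\mu_\pi(p)|}{p}\right).
\]
Combining the two displays produces exactly the claimed bound. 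The only subtle point in this last step is controlling the higher-prime-power contributions to the Dirichlet sum without Ramanujan, but this reuses precisely the convergence estimate already established for $h_2(x)$, so no new input is required.
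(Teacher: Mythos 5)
Your proposal follows essentially the same route as the paper's proof: apply Lemma~\ref{Maass form Shiu} to $g(n)=|\mu_\pi(n)|$ with $h_1,h_2$ constant, verify the first hypothesis via Rankin--Selberg positivity ($\sum_{p\le x}\lambda_\pi(p)^2\log p\ll x$) plus Cauchy--Schwarz, verify the second via a Satake-parameter bound strictly better than $p^{1/2}$ --- the paper uses exactly the Luo--Rudnick--Sarnak bound $|\alpha_{\pi,j}(p)|\le p^{1/2-1/(d^2+1)}$, which is the refinement you correctly identify as necessary since the trivial bound makes the prime-power sum diverge --- and then finish with the Euler product, $1+u\le e^u$, and the same $O(1)$ bound on the higher prime-power terms. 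Two minor corrections that do not affect the outcome: the critical exponent in the prime-power sum is $\alpha=2$ (not $\alpha$ near $d$, since the denominator exponent $\alpha(1/2+\eta_d)$ grows with $\alpha$), and for the general GL$(d)$ statement it is the LRS bound rather than the GL$(2)$-specific Kim--Sarnak $7/64$ that must carry the argument, though your citation of Luo--Rudnick--Sarnak already covers this.
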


\begin{proof}
Our starting point is to apply Lemma \ref{Maass form Shiu} to $g(n)=|\mu_{\pi}(n)|$. We claim the following inequalities hold:
\begin{flalign*}
\sum_{p\leq x}|\mu_{\pi}(p)|\log p&\ll x\\
\sum_{p\leq x}\sum_{\alpha\geq 2}\frac{|\mu_{\pi}(p^{\alpha})|}{p^{\alpha}}\log p^{\alpha}&\ll 1.
\end{flalign*}
(This shows that we can take $h_1(x)=1$ and $h_2(x)=1$ in Lemma \ref{Maass form Shiu}.) For the first inequality, we recall
\[-\frac{L'(s,\pi\otimes\pi)}{L(s,\pi\otimes\pi)}=\sum_{n=1}^{\infty}\frac{\Lambda_{\pi\otimes\pi}(n)}{n^s}, \hspace{3mm} \Re(s) \gg 1,\]
where $\Lambda_{\pi \times \pi}(n)$ is a non-negative function supported on prime powers and its values at primes are given by 
$$\Lambda_{\pi \times \pi}(p)=\lambda_{\pi}(p)^2\log p=\mu_{\pi}(p)^2\log p.$$ 
Then by Rankin-Selberg theory and Theorem 5.13 in \cite{IwKo2004}, we have:
\[\sum_{n\leq x}\Lambda_{\pi\otimes\pi}(n)\sim x, \hspace{3mm} x \to \infty.\]
We then apply Cauchy–Schwarz inequality to get
\[\sum_{p\leq x}|\mu_{\pi}(p)|\log p\leq\left(\sum_{p\leq x}\mu_{\pi}(p)^2\log p\right)^{1/2}\left(\sum_{p\leq x}\log p\right)^{1/2}\ll x.\]
On the other hand, we know that 
$$|\alpha_{\pi,i}(p)| \leq p^{\frac{1}{2}-\eta_d}$$
where $\eta_d = \frac{1}{d^2+1}$ (see \cite{LuRuSa1995}). Hence 
$$|\mu_{\pi}(p^{\alpha})|\ll p^{\alpha(\frac{1}{2}-\eta_d)}$$
for all $\alpha\geq 1.$ We also have $|\mu_{\pi}(p^{\alpha})|=0$ for $\alpha>d.$ In this case,
\[\sum_{p\leq x}\sum_{\alpha\geq 2}\frac{|\mu_{\pi}(p^{\alpha})|}{p^{\alpha}}\log p^{\alpha}\ll \sum_{p\leq x}\sum_{\alpha=2}^{d}\frac{1}{p^{\alpha(\frac{1}{2}+\eta_d)}}\log p^{\alpha}\ll 1.\]
Therefore, Lemma \ref{Maass form Shiu} applies to $g(n) = |\mu_\pi(n)|$ with $h_1(x) = h_2(x) = 1$ and we get
\[\sum_{n\leq x}|\mu_{\pi}(n)|\ll\frac{x}{\log x}\sum_{n\leq x}\frac{|\mu_{\pi}(n)|}{n}.\]
We then proceed to show
\[\sum_{n\leq x}\frac{|\mu_{\pi}(n)|}{n}\ll \exp\left(\sum_{p\leq x}\frac{|\mu_{\pi}(p)|}{p}\right).\]
Since $|\mu_{\pi}(n)|$ is a multiplicative function, we have:
\[\sum_{n\leq x}\frac{|\mu_{\pi}(n)|}{n}\leq \prod_{p\leq x}\left(1+\frac{|\mu_{\pi}(p)|}{p}+\cdots+\frac{|\mu_{\pi}(p^{d})|}{p^{d}}\right).\]
Then by $1+x \leq \exp(x)$ for $x\geq 0,$ we have
\begin{flalign*}
\prod_{p\leq x}\left(1+\frac{|\mu_{\pi}(p)|}{p}+\cdots+\frac{|\mu_{\pi}(p^{d})|}{p^{d}}\right)&\leq\exp\left(\sum_{p\leq x}\sum_{\alpha=1}^{d}\frac{|\mu_{\pi}(p^{\alpha})|}{p^{\alpha}}\right)\\
&=\exp\left(\sum_{p\leq x}\frac{|\mu_{\pi}(p)|}{p}+\sum_{p\leq x}\sum_{\alpha=2}^{d}\frac{|\mu_{\pi}(p^{\alpha})|}{p^{\alpha}}\right).
\end{flalign*}
We already showed that
\[\sum_{p\leq x}\sum_{\alpha=2}^{d}\frac{|\mu_{\pi}(p^{\alpha})|}{p^{\alpha}}\ll1\]
since $|\mu_{\pi}(p^{\alpha})|\ll p^{\alpha(\frac{1}{2}-\eta_d)}.$ This completes the proof of Proposition 4.1.
\end{proof}

We now apply Proposition \ref{non-ramanujan lemma} to the automorphic representation associated to Maass form $\phi$ to give a proof of Theorem \ref{absolute maass}.
\par
By Proposition \ref{non-ramanujan lemma}, it suffices to show
\[\sum_{p\leq x}\frac{|\lambda_{\phi}(p)|}{p}\leq \left(1-\frac{1}{12}\right)\log\log x.\]
We make use of the following inequality in \cite[Equation~(65)]{Ho2009}
\[t^{1/2}\leq 1+\frac{1}{2}(t-1)-\frac{1}{9}(t-1)^2+\frac{1}{36}(t-1)^3,\hspace{3mm} t \geq 0.\]
Let $t=|\lambda_{\phi}(p)|^2$. We have
\[|\lambda_{\phi}(p)|\leq \left(1-\frac{1}{2}-\frac{1}{9}-\frac{1}{36}\right)+\left(\frac{1}{2}+\frac{2}{9}+\frac{1}{12}\right)\lambda_{\phi}(p)^2+\left(-\frac{1}{9}-\frac{1}{12}\right)\lambda_{\phi}(p)^4+\frac{1}{36}\lambda_{\phi}(p)^6.
\]
We have the following relations:
\[\lambda_{\phi}(p)^4=2+3\lambda_{\Sym^2(\phi)}(p)+\lambda_{\Sym^4(\phi)}(p)\]
and
\[\lambda_{\phi}(p)^6=5+8\lambda_{\Sym^2(\phi)}(p)+4\lambda_{\Sym^4(\phi)}(p)+\lambda_{\Sym^2(\phi)}(p)\lambda_{\Sym^4(\phi)}(p)\]
By Rankin-Selberg theory we have
\[\sum_{p\leq x}\frac{|\lambda_{\phi}(p)|^2}{p}=\log\log x+O(1).\]
Use the fact that $\Sym^j(\phi)$ is cuspidal for $j=2,3,4$, we have, by Lemma \ref{orthognality relations for L functions} (a),
\[\sum_{p\leq x}\frac{\lambda_{\Sym^j(\phi)}(p)}{p}=O(1)\]
for $j=2,3,4.$ (Note that we can take $\pi_1=\Sym^j(\phi)$ and $\pi_2$ to be the trivial representation. The trivial representation obviously satisfies the Ramanujan conjecture.)
We can also write that $\lambda_{\Sym^2(\phi)}(p)\lambda_{\Sym^4(\phi)}(p)=\lambda_{\Sym^3(\phi)}(p)^2-1$. By Lemma \ref{orthognality relations for L functions} (b), we have:
\[\sum_{p\leq x}\frac{\lambda_{\Sym^3(\phi)}(p)^2}{p}\leq \log\log x.\]
Combining all the results together, we have:
\[\sum_{p\leq x}\frac{|\lambda_{\phi}(p)|}{p}\leq \left(1-\frac{1}{12}\right)\log\log x\]

\printbibliography

\vspace{5ex}
\noindent Department of Mathematics, Brown University, Providence, RI 02912, USA.

\noindent E-mail address: {\tt zhining\textunderscore wei@brown.edu}

\vspace{2ex}
\noindent Department of Mathematics, The Ohio State University, Columbus, OH 43210, USA.

\noindent E-mail address: {\tt zhao.3326@osu.edu}

\end{document}